\title[Maximum occupation time of transient ERW on $\mathbb Z$]{Maximum occupation time of a transient excited random walk on $\mathbb Z$}
\author{Reza Rastegar}
\address{Woodinville, WA, USA}
\email{reza.j.rastegar@gmail.com}
\author{Alexander Roitershtein}
\address{Department of Mathematics, Texas A\&M University, College Station, TX 77843, USA}
\email{alexander@stat.tamu.edu}
\date{May 25, 2026}
\subjclass[2020]{60K35; 60K37; 60J80}
\keywords{excited random walk; occupation times; branching process; limit theorems}
\theoremstyle{plain}
\newtheorem{theorem}{Theorem}[section]
\newtheorem{lemma}[theorem]{Lemma}
\theoremstyle{definition}
\newtheorem{assumption}[theorem]{Assumption}
\theoremstyle{remark}
\newtheorem{remark}[theorem]{Remark}
\newcommand{\one}[1]{\mbox {\bf 1}_{\{#1\}}}
\newcommand{\pp}{{\mathbb P}}
\newcommand{\witi}{\widetilde}
\newcommand{\zz}{{\mathbb Z}}
\newcommand{\nn}{{\mathbb N}}
\newcommand{\ee}{{\mathbb E}}
\newcommand{\rr}{{\mathbb R}}
\newcommand{\calf}{{\mathcal F}}
\newcommand{\veps}{\varepsilon}
\newcommand{\beq}{\begin{eqnarray*}}
	\newcommand{\feq}{\end{eqnarray*}}
\newcommand{\beqn}{\begin{eqnarray}}
	\newcommand{\feqn}{\end{eqnarray}}
\newcommand{\as}{\mbox{\rm a.\,s.}}
\newcommand{\io}{\quad \mbox{\rm i.\,o.}}
\newcommand{\speed}{{\mbox{\rm v}}}
\begin{document}
	\begin{abstract}
		We consider a transient excited random walk on $\mathbb Z$ and study the asymptotic behavior of the occupation time of a currently most visited site. In particular, our results imply that, in contrast to the random walks in random environment, a transient excited random walk does not spend an asymptotically positive fraction of time at its favorite (most visited up to a date) sites.
	\end{abstract}
	
	\maketitle
	
	\section{Introduction and statement of results}
	\label{intro}
	Excited random walks, also called random walks in cookie environments, are modifications of nearest-neighbor simple random walk in which, during the first several visits to each site of the integer lattice, the jump kernel may prefer one direction and assigns
	equal probabilities to the remaining $(2d-1)$ directions. If the current location of the random walk has been already visited more than a certain number of times, then the walk moves to one of its nearest neighbors
	with equal probabilities. The model was introduced by Benjamini and Wilson in \cite{bwilson} and extended by Zerner in \cite{martin1}.
	\par
	In this paper we focus on the excited random walks in dimension one.
	To define the transition mechanism of the random walk, fix an integer $M\in \nn$ and let
	\beq
	\Omega_{M}
	&=& \Bigl\{ \omega=(\omega(z,i))_{z\in\zz,\,i\in\nn}:\\
	&& \omega(z,i)\in[0,1] \text{ for } z\in\zz,\ 1\leq i\leq M,\\
	&& \text{and }\omega(z,i)=1/2 \text{ for } z\in\zz,\ i>M\Bigr\}.
	\feq
	The value of $\omega(z,i)$ determines the probability of the jump from $z$ to $z+1$ upon the $i$-th visit of the random walk to the site $z\in\zz.$
	The random walk is assumed to be nearest neighbor, and hence the probability of the jump from $z$ to $z-1$ upon its $i$-th visit to $z$
	is given by the complementary probability $1-\omega(z,i).$ The elements of the set $\Omega_M$ are called {\em cookie environments}.
	\par
	For a fixed $z\in\zz,$ $\bigl(\omega(z,i)\bigr)_{1\leq i\leq M}$ can be thought of as a sequence of numerical characteristics called ``strengths",
	associated with a pile of $M$ ``cookies" placed at $z.$ Correspondingly, $\omega(z,i)$ is
	referred to as the {\em strength of the $i$-th cookie} at the pile. With a slight abuse of language, using the above introduced jargon we will often
	identify the strength $\omega(z,i)$ of a cookie with the cookie itself.
	The transition kernel of the random walk can be informally described as follows:
	while the supply of the cookies at a given site lasts, the walker eats a cookie upon each visit there and then makes one step
	in a random direction, such that the probability of moving to the right is equal to the ``strength" of the cookie just eaten.
	\par
	More precisely, the random walk in a cookie environment $\omega\in \Omega_M$
	is defined as follows. Denote $\nn_0=\nn\cup\{0\},$ let $\Sigma=\zz^{\nn_0}$ be the state space of the infinite paths of a discrete-time random walk on $\zz$
	($\zz$ for the location and $\nn_0$ for the time), and let $\calf$ be its Borel $\sigma$-algebra (i.~e., the $\sigma$-algebra generated by the cylinder sets of the infinite product space $\Sigma$).
	For any $x\in \zz$ and $\omega\in \Omega_M,$ an excited random walk (abbreviated in what follows as ERW) starting at $x\in \zz$ in the cookie environment $\omega$
	is a sequence of random variables $X=\bigl(X_n\bigr)_{n\in \nn_0}$ defined on $(\Sigma, \calf, P_{x,\omega})$ such that $P_{x,\omega}(X_0=x)=1$ and
	\beq
	P_{x,\omega} (X_{n+1}=X_n+1 | \calf_n ) = \omega(X_n, \ell_n),
	\feq
	where $\calf_n:= \sigma(X_i,\, 0\leq i\leq n)$ and
	\beqn
	\label{ell-current}
	\ell_n:=\#\bigl\{0\leq i\leq n: X_i = X_n\bigr\}
	\feqn
	is the number of visits to the current site by time $n.$
	The measure $P_{x,\omega}$ is usually referred to as the {\em quenched law} of the excited random walk in the
	cookie environment $\omega.$ Let $\pp$ be a probability measure on $\Omega_M$
	that makes the collection of ``piles" $\omega_z:=(\omega(z,i))_{i\in\nn}$ indexed by $z\in\zz$ into an i.i.d. sequence.
	Notice that we do not insist on the independence of the cookies within a given pile, that is
	the random variables $\omega(z,i)$ for a fixed $z\in\zz$ can be dependent under $\pp.$
	The (associated with $\pp$) {\em annealed} (average) law $P_x$ of the ERW on $(\Sigma,\calf)$ is defined by setting
	$P_x(\,\cdot\,)=\ee[P_{x,\omega}(\,\cdot\,)],$ where $\ee$ is the expectation induced by the probability law $\pp.$
	\par
	Many important aspects of the asymptotic behavior of excited random walks on $\zz$ are by now well-understood.
	In particular, Zerner in \cite{martin1}, Basdevant and Singh in \cite{bsingh1,bsingh2}, Kosygina and Zerner in \cite{kosygina1}, and Kosygina and Mountford in
	\cite{kosygina2} characterized the recurrence-transience behavior and possible speed regimes of the ERW, and proved limit theorems for the
	fluctuations of the current location of the ERW.  The goal of this paper is to study the asymptotic dynamics of the occupation
	time of a currently most visited site for a transient ERW.
	\par
	The consumption of a cookie $\omega(z,i)$ creates the local drift (i.e., the bias in the conditional on the history $\calf_n$ expectation
	of the subsequent displacement) equal to $E_{x,\omega}\bigl[X_{n+1}-X_n\bigm|X_n=z,\ell_n=i\bigr]=2\omega(z,i)-1.$
	Let
	\beq
	\delta= \ee \Bigl[ \sum_{i=1}^M \bigl(2\omega(z,i)-1\bigr) \Bigr]
	\feq
	be the annealed expectation of the total drift (i.e., the ``boost" in the positive direction) available to the random walk at a site $z\in\zz.$
	Since the piles $\omega_z$ are assumed i.i.d., $\delta$ is independent of $z.$
	Following \cite{kosygina1} and \cite{kosygina2} (and in contrast to the original version presented in \cite{bwilson,martin1}) we do not impose
	the condition that $\pp\bigl(\omega(z,i)\geq 1/2\bigr)=1,$ thereby allowing both ``positive" and ``negative" cookies.
	It turns out (see \cite{kosygina2,kosygina1}) that the asymptotic behavior of a one-dimensional excited random walk is largely determined by the
	value of the parameter $\delta.$ In particular, under a mild non-degeneracy assumption on the cookie environment, the random walk is transient
	to the right if and only if $\delta>1.$
	\par
	Throughout this paper we will impose the following conditions on the cookie environment.
	\begin{assumption}
		\label{ass1}
		The following assumptions hold:
		\begin{itemize}
			\item[(a)] Independence: the sequence of ``piles" $\omega_z=(\omega(z,i))_{i \in \nn}$ indexed by sites $z\in\zz$ is an i.i.d. sequence under $\pp$.
			\item[(b)] Non-degeneracy: $\ee\bigl[\prod_{i=1}^M\omega(0,i) \bigr]>0\,$ and $\,\ee\bigl[\prod_{i=1}^M(1-\omega(0,i)) \bigr]>0$.
			\item[(c)] Transience: $\delta>1.$
		\end{itemize}
	\end{assumption}
	The non-degeneracy condition rules out the two one-sided degenerate cases at the level of cookie piles and is the standard ellipticity-type assumption used in the one-dimensional ERW transience theory. It is known \cite{kosygina1,martin1} that under Assumption~\ref{ass1} the ERW is transient to the right
	(that is, $P_0\bigl(\lim_{n\to\infty} X_n=\infty\bigr)=1$) and, furthermore, has the asymptotic speed
	\beqn
	\label{rwspeed}
	\speed:= \lim_{n\rightarrow \infty}\frac{X_n}{n}\in [0,1),
	\feqn
	which is strictly positive if and only if $\delta>2.$
	\par
	Define the occupation time of the ERW at site $x\in\zz$ as
	\beqn
	\label{localtime}
	\xi_n(x):= \# \bigl\{ 0\leq i \leq n\,:\, X_i=x  \bigr\}.
	\feqn
	Thus $\xi_n(x)$ is the number of times that the ERW visits $x\in\zz$ during the first $n$ steps. Let
	\beqn
	\label{suplocaltime}
	\xi^*_n:=\max_{x\in\zz}{\xi_n(x)}
	\feqn
	be the largest number of visits to a single site during the first $n$ steps. For the sake of notational convenience
	we will occasionally write $\xi^*(n)$ for $\xi_n^*.$
	The asymptotic properties of the process $\xi^*:=(\xi^*_n)_{n\in \nn}$ can be compared to
	those of the simple random walk as well as of a random walk in random environment (abbreviated in what follows as RWRE) in dimension one.
	For a comprehensive up-to-date review of the former topic, see a monograph of R\'{e}v\'{e}sz \cite{revesz1}.
	For more recent developments on RWRE, we refer to \cite{nina2, nina1, nina3} and references therein.
	\par
	This paper is devoted to the study of the limit points of the sequence $\xi^*$ for a transient ERW. The approach adopted here is based on a reduction of the
	study of the asymptotic behavior of the ERW to that of a branching process and the subsequent reformulation of the problem
	in terms of the asymptotic dynamics of the most populated generation of the branching process.
	Similar questions for transient RWRE have been addressed in the interesting paper by Gantert and Shi \cite{nina1}.
	The essential branching processes machinery which enables the implementation of our approach to excited random walks was introduced in \cite{bsingh1,bsingh2}
	and further developed in \cite{kosygina2,kosygina1}.
	\par
	Our first result concerns non-ballistic ERW.
	\begin{theorem}
		\label{thm2}
		Suppose that Assumption~\ref{ass1} holds with $\delta\in (1,2].$ Then,
		\begin{itemize}
			\item [(i)] The following holds:
			\beqn
			\label{positive}
			\limsup_{n\to\infty}\frac{\xi^*_n}{n^{1/2}} > 0, \qquad P_0-\as
			\feqn
			\item [(ii)] Furthermore, for any $\alpha>\frac{1}{\delta}:$
			\beq
			\lim_{n\to\infty}\frac{\xi_n^*}{n^{1/2}(\log  n)^\alpha}=0
			\qquad
			\mbox{\rm while}
			\qquad
			\lim_{n\to\infty}\frac{(\log {n})^{\alpha} \xi_n^*}{n^{1/2}}=\infty, \qquad P_0-\as
			\feq
		\end{itemize}
	\end{theorem}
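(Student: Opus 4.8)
The plan is to push the question onto the branching-process representation of a transient ERW developed in \cite{bsingh1,bsingh2,kosygina1,kosygina2} and to extract the a.s.\ behaviour of $\xi_n^*$ from the running maximum of that process. Write $T_m=\inf\{k:X_k=m\}$. When the walk first passes from level $N$ to level $N+1$, the downcrossings it performs in the meantime, organized by the edge crossed, form (up to an $M$-dependent correction from cookies eaten during previous passages) one excursion $\witi Z^{(N)}=\bigl(\witi Z^{(N)}_k\bigr)_k$ of a critical branching process with migration, whose recurrence versus positive recurrence at $\delta=2$ is exactly the speed-zero versus speed-positive dichotomy of the walk \cite{bsingh1,kosygina1}. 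Put $A_N=\sum_k\witi Z^{(N)}_k$ (its total progeny) and $H_N=\max_k\witi Z^{(N)}_k$ (its height). Then $T_m=m+2\sum_{N=0}^{m-1}A_N$, and $\xi_{T_m}(x)$ is, up to a bounded term, the superposition over the increments $N\in[x,m)$ of the values $\witi Z^{(N)}_{N-x}$, so that $\xi_{T_m}^*\asymp\max_{0\le N<m}H_N$. Since $n\mapsto\xi_n^*$ is nondecreasing and $T_m\le n<T_{m+1}$ pins down the level reached at time $n$, the whole problem reduces to the a.s.\ growth of the families $\{\max_{N\le m}H_N\}$ and $\{T_m\}$, together with the partial progeny of the currently running increment.

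The analytic inputs are the two tail estimates $\pp(A_N>a)\asymp a^{-\delta/2}$ — this is the tail of $T_{N+1}-T_N$ behind the stable-$(\delta/2)$ scaling $T_m\asymp m^{2/\delta}$ of \cite{kosygina2} — and, since area and squared height of such an excursion are comparable, $\pp(H_N>h)\asymp h^{-\delta}$, both up to slowly varying corrections (a power of $\log$ at $\delta=2$, harmless since the theorem asks only for $\alpha>1/\delta$). After the bounded cookie correction the increments are weakly dependent, so the usual extreme-value and small-ball machinery applies. First Borel--Cantelli over records, second Borel--Cantelli with small-ball bounds for stable sums, along $m=2^j$ with monotone interpolation in $m$, yields: for every $\veps>0$, $\pp_0$-a.s.\ eventually
\beq
m^{1/\delta}(\log m)^{-1/\delta-\veps}\ \le\ \max_{N\le m}H_N\ \le\ m^{1/\delta}(\log m)^{1/\delta+\veps}
\qquad\mbox{and}\qquad
m^{2/\delta}(\log m)^{-\veps}\ \le\ T_m\ \le\ m^{2/\delta}(\log m)^{2/\delta+\veps}.
\feq
One also records $\limsup_m m^{-1/\delta}\max_{N\le m}H_N=\infty$, and, from the one-big-jump property of stable-$(\delta/2)$ sums, that with probability bounded away from $0$ the largest of $A_0,\dots,A_{m-1}$ is both a fixed positive fraction of $T_m$ and a ``typical'' (not thin, not fat) excursion, so that on a positive-probability event $\max_{N<m}H_N\asymp\sqrt{T_m}$, uniformly in $m$.

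For the upper bound in (ii): if $T_m\le n<T_{m+1}$ then $\xi_n^*\le\xi_{T_{m+1}}^*\asymp\max_{N\le m}H_N$, while the lower envelope for $T_m$ forces $m\le n^{\delta/2}(\log n)^{\veps}$; hence $\xi_n^*\le n^{1/2}(\log n)^{1/\delta+2\veps}$ a.s.\ eventually, and letting $\veps\downarrow0$ gives $\xi_n^*/\bigl(n^{1/2}(\log n)^{\alpha}\bigr)\to0$ for every $\alpha>1/\delta$. Part (i) follows from the second Borel--Cantelli lemma along a sparse subsequence: in the windows $[m_{j-1},m_j)$ with $m_j=2^j$ the tallest increment is, with probability bounded below, a positive fraction of $\sqrt{T_{m_j}-T_{m_{j-1}}}\asymp\sqrt{T_{m_j}}$, and these events are essentially independent across $j$; hence $\xi_{T_{m_j}}^*\ge c\sqrt{T_{m_j}}$ infinitely often and so $\limsup_n n^{-1/2}\xi_n^*>0$, a quantity a zero--one law identifies with an a.s.\ positive constant. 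For the lower bound in (ii) one needs $\xi_n^*\gtrsim n^{1/2}(\log n)^{-1/\delta-\veps}$ for all large $n$; split into $n\le 2T_m$ and $n>2T_m$ (the walk ``stuck'' at level $m$): in the first case use the correlated comparison $\max_{N<m}H_N\gtrsim\sqrt{T_m}\,(\log m)^{-1/\delta-\veps}$; in the second, the running increment has by time $n$ accumulated partial progeny of order $n-T_m\asymp n$ and therefore, by the running-maximum estimate applied to this single unfinished excursion, has already produced a site visited $\gtrsim n^{1/2}(\log n)^{-1/\delta-\veps}$ times.

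The step I expect to be the genuine obstacle is this lower bound — precisely, replacing the crude exponent $2/\delta$ (what one gets by multiplying the upper envelope for $T_m$ by the lower envelope for $\max_N H_N$ as if they were independent) by the sharp $1/\delta$. The estimate one really has to prove is $\sum_{N<m}A_N\lesssim(\log m)^{1+o(1)}\bigl(\max_{N<m}H_N\bigr)^2$, i.e.\ that the time accumulated up to level $m$ is, up to logarithmic factors, carried by excursions whose heights do not exceed the tallest one; equivalently, a slow passage is always witnessed by a proportionally heavily visited site, so that ``time elapses slowly'' and ``$\max_NH_N$ is small'' are incompatible. Establishing this uniformly in $(m,n)$ — bounding a single excursion's area by its squared height up to $\log$'s simultaneously over all $N<m$, controlling the spread of the heavy-tailed sum $\sum_N H_N^2$, handling an incomplete current excursion, and checking throughout that the $M$-dependent cookie correction is negligible — is the technical heart; the remaining ingredients are routine extreme-value asymptotics and Borel--Cantelli.
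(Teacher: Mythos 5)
Your overall framework---mapping the walk's downcrossings onto a branching process with migration, parametrizing by life cycles with area $A_N$ (called $S_k$ in the paper) and height $H_N$ (called $M_k$), using the tail laws $P(S_0>n)\sim K_0 n^{-\delta/2}$ and $P(M_0>n)\sim K_2 n^{-\delta}$, and running Borel--Cantelli on sparse geometric subsequences with monotone interpolation---is exactly the paper's strategy. However, there is a quantitative error in your almost-sure envelopes that causes you to manufacture an obstacle that is not actually there. You state the lower envelope as
$m^{1/\delta}(\log m)^{-1/\delta-\veps}\le\max_{N\le m}H_N$. This is far from sharp. Since $\max_{N\le m}H_N$ is non-decreasing and the lower-deviation tail of a running max of i.i.d.\ Pareto-$\delta$ variables is exponential, $P\bigl(\max_{N\le m}H_N<m^{1/\delta}(\log m)^{-\veps}\bigr)\approx\exp\bigl(-K_2(\log m)^{\delta\veps}\bigr)$, and along $m_i=3^i$ the resulting series $\sum_i\exp\bigl(-C(i+1)^{\delta\veps}\bigr)$ converges for \emph{every} $\veps>0$. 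So in fact $\max_{N\le m}H_N\ge m^{1/\delta}(\log m)^{-\veps}$ eventually a.s.\ for every $\veps>0$; the $1/\delta$ in your exponent is spurious. (There is a genuine asymmetry here with the upper envelope, where the union bound is polynomial in the target and really does force the exponent $1/\delta$.)

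Once the lower envelope is corrected, your ``naive'' combination does give the claimed $\alpha>1/\delta$ directly: with $T_m\le n<T_{m+1}$, using $T_{m+1}^{1/2}\lesssim m^{1/\delta}(\log m)^{1/\delta+\veps/2}$ and $\xi^*_{T_m}\gtrsim\max_{j<\varrho_m}M_j\gtrsim m^{1/\delta}(\log m)^{-\veps}$ eventually a.s., one gets $\xi^*_n(\log n)^\alpha/n^{1/2}\gtrsim(\log m)^{\alpha-1/\delta-3\veps/2}\to\infty$ for $\veps$ small. Consequently, the ``technical heart'' you identify---proving a pathwise bound $\sum_{N<m}A_N\lesssim(\log m)^{1+o(1)}\bigl(\max_{N<m}H_N\bigr)^2$---is not needed at all, and the paper does not prove anything of that kind. (That bound is also plausible only for $\delta\le 2$ and would not generalize.) The paper's argument for the lower bound in (ii) is the corrected version of your envelope combination, packaged as a single Borel--Cantelli estimate on $P_0\bigl(\xi^*(T_{m_i})<c\,\phi(\psi_\veps(m_{i+1}+1))\bigr)$ along $m_i=3^i$, where $\phi(n)=n^{1/2}(\log n)^{-\alpha}$ and $\psi_\veps$ is the a.s.\ upper envelope of $T_m$, and the $\alpha>1/\delta$ threshold drops out cleanly. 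Two smaller points: your Part~(i) sketch (sparse windows, second Borel--Cantelli) matches the paper's treatment of $\delta=2$, but for $\delta\in(1,2)$ the paper instead proves $\liminf_n P_0(\xi^*_n\ge c\sqrt n)>0$ and invokes a $0$--$1$ law (which it also establishes, as Lemma~3.1, rather than assuming); and the ``essentially independent'' step across windows needs a little care because $T_{m_j}$ involves all increments below $m_j$, not just those in the current window.
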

	The above theorem implies in particular that unlike RWRE (see \cite{nina1}), a non-ballistic transient ERW  does not spend a positive fraction of time at its favorite sites.
	While the asymptotic behavior of $\xi_n^*$ for transient RWRE seems to be determined by the so-called ``traps" created by a random potential (cf.
	\cite{nina1} and \cite{nina2}), and is radically different from that of the simple unbiased random walk,
	the limsup asymptotic of $\xi_n^*$ for a non-ballistic transient ERW turns out to be rather similar to its counterpart
	for a simple unbiased random walk (cf. Theorem~11.3 in \cite{revesz1}). We also remark that, based on a comparison with the latter, we believe that in fact $\limsup_{n\to\infty}\frac{\xi^*(n)}{n^{1/2}}=\infty$ under the conditions of Theorem~\ref{thm2},
	but we are unable to prove this conjecture.
	\par
	The next theorem deals with the asymptotic behavior of $\xi^*(n)$ for ballistic ERW.
	Recall that, for $\rho \in\rr,$ a sequence $\phi:\nn\to\rr$ is {\em regularly varying (at infinity) of index $\rho$}
	if $\phi(n)=n^\rho L(n)$ for some sequence $L:\nn\to\rr$ such that $\lim_{n\to\infty} L(\lfloor\lambda n\rfloor)/L(n)=1$
	for all $\lambda >0$ (that is, $L$ is {\em slowly varying}).
	Here and henceforth, $\lfloor x \rfloor$ denotes the integer part of a real number $x,$ that is $\lfloor x \rfloor=\max\{n\in\zz:n\leq x\}.$
	\begin{theorem}
		\label{thm1}
		Suppose that Assumption~\ref{ass1} holds with $\delta>2.$
		\begin{itemize}
			\item [(i)] Let $\phi:\nn\to\rr$ be a regularly varying sequence of a positive index $\rho>0.$
			Then,
			\beq
			\limsup_{n\to\infty}\frac{\xi^*_n}{\phi(n)} =
			\left\{
			\begin{array}{l}
				0 \\
				\infty
			\end{array}
			\right.
			\quad P_0-\as \qquad \text{\rm if and only if} \qquad
			\sum_{n=1}^\infty{\frac{1}{[\phi(n)]^{\delta}}}\,
			\left\{
			\begin{array}{l}
				<\infty
				\\
				= \infty. \\
			\end{array}
			\right.
			\feq
			In particular, for any $\alpha>1/\delta,$
			\beq
			\lim_{n\to\infty} \frac{\xi_n^*}{n^{1/\delta}(\log  n)^\alpha} =0, \qquad P_0-\as,
			\feq
			and hence $\lim_{n\to\infty}\frac{\xi_n^*}{n}=0,$ $P_0-\as$
			\item [(ii)] For $\alpha>1/\delta,$
			\beq
			\lim_{n\to\infty} \frac{(\log  n)^\alpha\xi_n^*}{n^{1/\delta}} =\infty, \qquad P_0-\as
			\feq
		\end{itemize}
	\end{theorem}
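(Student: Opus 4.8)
The plan for Theorem~\ref{thm1} is to reduce $\xi^*_n$ to the maximum of order $n$ independent heavy-tailed variables — as Gantert and Shi do for favorite sites of transient RWRE in \cite{nina1} — and then to run the classical extreme-value Borel--Cantelli arguments. Write $T_m:=\inf\{n\ge 0:X_n=m\}$ and let $D_x$ be the total number of jumps from $x$ to $x-1$ ever made by the walk. Since $\delta>1$ the ERW is transient to the right, and the sites $x$ with $D_x=0$ — equivalently, the sites $x$ with $X_n\ge x$ for all $n\ge T_x$ — are regeneration sites; by Assumption~\ref{ass1} there are infinitely many of them almost surely, say $0<\mathcal R_1<\mathcal R_2<\cdots$. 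As in \cite{bsingh1,bsingh2,kosygina1,kosygina2}, cutting the trajectory at the $\mathcal R_i$ produces, for $i\ge 1$, an i.i.d.\ sequence of blocks, the $i$-th recording the (almost surely finite) total occupation times $\xi_\infty(x):=\lim_{n\to\infty}\xi_n(x)$ of the sites $x\in[\mathcal R_i,\mathcal R_{i+1})$ and the spatial and temporal block lengths $\mathcal R_{i+1}-\mathcal R_i$ and $T_{\mathcal R_{i+1}}-T_{\mathcal R_i}$; this sequence is independent of the almost surely finite initial term $\mathcal M_0:=\max_{x<\mathcal R_1}\xi_\infty(x)$. Put $\mathcal M_i:=\max_{\mathcal R_i\le x<\mathcal R_{i+1}}\xi_\infty(x)$ for $i\ge 1$; then $(\mathcal M_i)_{i\ge 1}$ is i.i.d., and since $\xi_\infty(x)=D_x+D_{x+1}+1$ for $x\ge 1$, $\mathcal M_i$ is comparable to the maximum, over one excursion away from $0$, of the backward branching process of \cite{bsingh1,bsingh2,kosygina1,kosygina2}, a Markov chain on $\nn_0$ that is positive recurrent because $\delta>1$.

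The key analytic input, which I would state as a lemma drawn from the branching-process tail estimates of Kosygina and Mountford \cite{kosygina2} (refining \cite{bsingh1,bsingh2,kosygina1}), is the \emph{sharp} tail
\[
\pp(\mathcal M_1>x)\ \sim\ c_0\,x^{-\delta}\qquad(x\to\infty),\qquad c_0\in(0,\infty).
\]
The exponent is $\delta$ because $\delta$ is exactly the order of the heaviest finite moment of the stationary law of that branching chain, and inside an excursion the chain behaves like a critical branching process, whose excursion maximum inherits a regularly varying tail of index $-\delta$; crucially there is a genuine constant, not merely a slowly varying factor (an extra $\log x$, say, would already break the $\sum\phi(n)^{-\delta}$ criterion below). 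Next, since $\delta>2$ we have $\speed=\lim X_n/n>0$, which forces $\ee[T_{\mathcal R_{i+1}}-T_{\mathcal R_i}]<\infty$ (hence also $\ee[\mathcal R_{i+1}-\mathcal R_i]<\infty$), so by the renewal law of large numbers the number $J_n$ of regeneration blocks completely traversed by time $n$ satisfies $J_n\sim cn$ almost surely, $c:=1/\ee[T_{\mathcal R_{i+1}}-T_{\mathcal R_i}]>0$. Because after reaching $\mathcal R_{i+1}$ the walk never revisits $[\mathcal R_i,\mathcal R_{i+1})$, we have $\xi_n(x)=\xi_\infty(x)$ on block $i$ once $i\le J_n$, while at time $n$ every visited site lies below $\mathcal R_{J_n+2}$; hence, for all large $n$,
\[
\max_{1\le i\le J_n}\mathcal M_i\ \le\ \xi^*_n\ \le\ \mathcal M_0+\max_{1\le i\le J_n+1}\mathcal M_i ,
\]
and $J_n\sim cn$ yields constants $0<c'<C'$ with $\max_{i\le c'n}\mathcal M_i\le \xi^*_n\le \mathcal M_0+\max_{i\le C'n}\mathcal M_i$ eventually.

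Part (i) now reduces to the classical almost sure behaviour of $\max_{i\le n}\mathcal M_i$ for i.i.d.\ $\mathcal M_i$ with a regularly varying tail of index $-\delta$. As $\phi$ has positive index, $\phi(n)\to\infty$, so $\mathcal M_0$ is negligible and $\phi(c'n),\phi(C'n)$ are each comparable to $\phi(n)$; thus it suffices to show that $\limsup_n\max_{i\le n}\mathcal M_i/\phi(n)$ equals $0$ when $\sum_n\phi(n)^{-\delta}<\infty$ and equals $+\infty$ when $\sum_n\phi(n)^{-\delta}=\infty$. In the convergent case Cauchy condensation gives $\sum_k 2^k\phi(2^k)^{-\delta}<\infty$, and a union bound over $i\le 2^{k+1}$ using $\pp(\mathcal M_1>x)\le 2c_0x^{-\delta}$, Borel--Cantelli along the dyadic scale, and the monotonicity of $n\mapsto\max_{i\le n}\mathcal M_i$ give the limsup $0$. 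In the divergent case $\sum_n\pp(\mathcal M_n>\phi(n))=\infty$ while the events $\{\mathcal M_n>\phi(n)\}$ are independent, so by the second Borel--Cantelli lemma $\mathcal M_n>\phi(n)$ infinitely often, hence $\max_{i\le n}\mathcal M_i>\phi(n)$ infinitely often; applying this with $\phi$ replaced by $\epsilon^{-1}\phi$ for every $\epsilon>0$ forces the limsup to be $+\infty$. Transferring through the sandwich proves the dichotomy for $\xi^*_n$. Finally $\phi(n)=n^{1/\delta}(\log n)^\alpha$ gives $\sum_n\phi(n)^{-\delta}=\sum_n n^{-1}(\log n)^{-\alpha\delta}$, convergent exactly when $\alpha\delta>1$, so $\xi^*_n=o(n^{1/\delta}(\log n)^\alpha)$ for $\alpha>1/\delta$, and then $\xi^*_n/n\to 0$ since $1/\delta<1/2$.

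For part (ii), fix $\alpha>1/\delta$ and $K>0$ and set $\psi(n):=Kn^{1/\delta}(\log n)^{-\alpha}$. Using the lower bound of the sandwich (valid for large $n$), the independence of the $\mathcal M_i$, the inequality $(1-u)^k\le e^{-uk}$, and $\pp(\mathcal M_1>y)\ge (c_0/2)y^{-\delta}$ for large $y$,
\[
\pp(\xi^*_n\le\psi(n))\ \le\ \bigl(1-\pp(\mathcal M_1>\psi(n))\bigr)^{\lfloor c'n\rfloor}\ \le\ \exp\bigl(-c_1\,n\,\psi(n)^{-\delta}\bigr)
\]
for some $c_1>0$ and all large $n$; since $n\,\psi(n)^{-\delta}=K^{-\delta}(\log n)^{\alpha\delta}$ with $\alpha\delta>1$, the right-hand side is summable in $n$, so Borel--Cantelli gives $\xi^*_n>\psi(n)$ eventually, i.e.\ $\liminf_n(\log n)^\alpha\xi^*_n/n^{1/\delta}\ge K$; letting $K\to\infty$ finishes the proof. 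The one genuinely deep ingredient is the sharp block-maximum tail of the second paragraph; everything else — the regeneration decomposition, the renewal bookkeeping relating time, space and block count uniformly enough for almost sure statements, and the extreme-value estimates — is routine, so the crux is the Kesten-type branching-process asymptotics supplied by \cite{kosygina2}.
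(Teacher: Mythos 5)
Your decomposition is the same as the paper's in substance: your regeneration sites $\mathcal R_i$ are exactly the renewal sites corresponding to the paper's branching-process cycle boundaries $\sigma_k$, your block maxima $\mathcal M_i$ correspond (within a factor of $2$, via $\xi_\infty(x)=D_x+D_{x+1}+1$) to the paper's $M_k$, and the key tail input is the paper's \eqref{Mlemma}, as you anticipate. One small overstatement: for $\mathcal M_i$ itself you only get a two-sided bound $\mathcal M_i\asymp x^{-\delta}$, not a sharp constant $c_0$, because the sandwich between $\max D_x$ and $1+2\max D_x$ does not collapse; but this does not matter for the $\sum\phi^{-\delta}$ dichotomy, and your argument for part~(i) is sound (the paper avoids your dyadic condensation by using the argmax $r_m$ from \eqref{gm} plus monotonicity of $\phi$, which is a cosmetic difference).

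There is, however, a genuine gap in your part~(ii). You write
\[
\pp\bigl(\xi^*_n\le\psi(n)\bigr)\ \le\ \bigl(1-\pp(\mathcal M_1>\psi(n))\bigr)^{\lfloor c'n\rfloor},
\]
invoking the lower sandwich $\max_{i\le c'n}\mathcal M_i\le\xi^*_n$. But that sandwich holds only \emph{eventually almost surely}, on the event $\{J_n\ge c'n\}$, not uniformly in $n$; the honest bound is
\[
\pp\bigl(\xi^*_n\le\psi(n)\bigr)\ \le\ \pp\Bigl(\max_{i\le c'n}\mathcal M_i\le\psi(n)\Bigr)\ +\ \pp\bigl(J_n< c'n\bigr),
\]
and you must sum the second term. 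The temporal block lengths $T_{\mathcal R_{i+1}}-T_{\mathcal R_i}$ have regularly varying tails of index $\delta/2$ (this is the $S_k$ contribution in \eqref{sum_law}), so for $\delta\in(2,4]$ they have \emph{infinite variance}, and the one-big-jump heuristic gives $\pp(J_n<c'n)\asymp n^{1-\delta/2}$, which is \emph{not} summable in $n$ on that range of $\delta$. Thus your Borel--Cantelli over all $n$ breaks down precisely when $2<\delta\le 4$. The paper sidesteps this in two ways you do not use: it runs the Borel--Cantelli along a geometric subsequence $m_i=3^i$ (relying on the monotonicity of $\xi^*_n$ to fill in between), and it does the renewal concentration at the \emph{spatial} index scale, bounding $P_0^V(\varrho_m<a_m^-)$ in \eqref{an-ineq5} via Chebyshev on $\sigma_0$, which has tail index $\delta>2$ and hence finite variance. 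To repair your proof you would have to add both ingredients: work along $n_k=3^k$ and either control $\pp(J_{n_k}<c'n_k)$ by a heavy-tailed moderate-deviation estimate as in \cite[Theorem~3.4.1]{rwbook}, or switch from the temporal count $J_n$ to a spatial count so that the relevant inter-renewal variable has tail index $\delta$ rather than $\delta/2$.
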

	The rest of the paper is organized as follows. In Section~\ref{reduction}, we consider an auxiliary branching process
	formed by successive level crossings along the random walk path.
	The proofs of Theorems~\ref{thm2} and~\ref{thm1} are contained in Section~\ref{proofs}.
	\section{Reduction to a branching process}
	\label{reduction}
	The proof of our main results, given in this section, relies on the use of
	a mapping of the paths of ERW into realizations of a suitable branching process with migration.
	In this section we discuss the branching process framework and recall some auxiliary results related
	to it; see \cite{kosygina2,kosygina1} or expository surveys \cite{erwnotes, kosygina3} for more details.
	\par
	For $m\in\nn,$ let $T_m$ be the first hitting time of site $m,$ that is
	\beq
	T_m=\inf\bigl\{n\in\nn: X_n=m \bigr\}.
	\feq
	Since the ERW is transient to the right under Assumption~\ref{ass1}, the random variables $T_m$ are almost surely finite
	for all $m\in\nn$ under the law $P_0.$ Moreover, it follows from \eqref{rwspeed} (by passing to the random subsequence of indices
	$n=T_m$ in \eqref{rwspeed}) that
	\beqn
	\label{rwspeed1}
	\lim_{n\to\infty} \frac{T_n}{n}=\lim_{m\to\infty} \frac{T_m}{X_{T_m}}=\speed^{-1}\in[1,\infty],\qquad P_0-\as
	\feqn
	Set now $D_m^m:=0$ and for $k\leq m-1$ let
	\beq
	D_k^m= \sum_{i=0}^{T_m-1}\one{X_{i+1}=k-1,\, X_i=k}
	\feq
	be the number of down-crossing steps of the ERW from site $k$ to $k-1$ before time $T_m.$
	Then (see, for instance, \cite{nina1,kosygina1}),
	\beqn
	\label{tm_rep}
	T_m = m + 2\sum_{k\leq m}{D_k^m}=m + 2\sum_{0\leq k\leq m}{D_k^m}+2\sum_{k<0}{D_k^m}.
	\feqn
	It follows from \eqref{localtime} that
	\beq
	\xi_{T_m}(k) = \left\{
	\begin{array}{lll}
		0 &~\mbox{for}&k>m,\\
		1 &~\mbox{for}&k=m,\\
		D_k^m+D_{k+1}^m+\one{k\geq 0}&~\mbox{for}&k<m,
	\end{array} \right.
	\feq
	and hence
	\beqn
	\label{mineq}
	\max_{0\leq k\leq m} D_k^m &\leq& \xi^*_{T_m} = \max\Bigl\{1,\,\max_{k<m}{(D_k^m+D_{k+1}^m+\one{k\geq 0})}\Bigr\} \nonumber \\
	&\leq& 1+ 2\max_{0\leq k \leq m} D_k^m+2\max_{k<0} D_k^m.
	\feqn
	Notice that $\max_{k<0} D_k^m$ is bounded above by the total time spent by the random walk on the negative half-line.
	Since the random walk is transient to the right, the latter quantity is $P_0-\as$ finite.
	Therefore, for any eventually increasing non-negative sequence $\phi:\nn\to \rr_+,$ we have
	\beqn
	\label{fineq}
	\limsup_{m\to\infty}\frac{\max_{0\leq k\leq m} D_k^m}{\phi(m)} &\leq& \limsup_{m\to\infty}\frac{\xi^*_{T_m}}{\phi(m)}
	\leq 2 \limsup_{m\to\infty}\frac{\max_{0\leq k \leq m} D_k^m}{\phi(m)}.
	\feqn
	Similar inequalities hold with the $\limsup$ replaced by the $\liminf.$
	\par
	To elucidate the probabilistic structure of the sequence $D_k^m,$ it is convenient to exploit the following
	alternative definition of the random walk $(X_n)_{n\geq 0}.$ Assume that the underlying probability space is enlarged
	to include a sequence of, conditionally on $\omega,$ independent Bernoulli random variables (``coins") $\bigl(B(z,i)\bigr)_{i\in\nn,z\in\zz}$ such that
	\beqn
	\label{coin-tossing}
	P_{0,\omega}\bigl(B(z,i)=1\bigr)=\omega(z,i)\qquad \mbox{and}\qquad P_{0,\omega}\bigl(B(z,i)=-1\bigr)=1-\omega(z,i).
	\feqn
	Then the ERW $X$ can be alternatively defined by specifying the jump sequence recursively, as follows:
	\beqn
	\label{steps}
	X_{n+1}=X_n+B\bigl(X_n,\ell_n),
	\feqn
	where $\ell_n$ is introduced in \eqref{ell-current}. We adopt the terminology of \cite{kosygina2,kosygina1} and
	refer to the event $\{B(z,i)=1\}$ as a ``success" and to the event $\{B(z,i)=-1\}$ as a ``failure". For $z\geq 0,$ denote by
	$F^{(z)}_m$ the number of failures before the $m$-th success in the sequence $B^{(z)}:=\bigl(B(z,i)\bigr)_{i\in\nn}.$
	Let $V:=(V_k)_{k\geq 0}$ be a Markov chain on $\nn_0$ with transition kernel defined (under the law $P_0$) by
	means of the following recursion:
	\beq
	V_{k+1}=F_{V_k+1}^{(k)},\qquad k \geq 0.
	\feq
	The process $V$ can be thought of as a branching process with the following properties:
	\begin{enumerate}
		\item There is exactly one immigrant in each generation and the immigration happens before the reproduction.
		\item The number of offspring of $m$-th individual in generation $k\in \nn,$ which we denote by $W^{(k)}_m,$ is equal to $F^{(k)}_m-F^{(k)}_{m-1}.$ For future reference, we write
		\beqn
		\label{wmk}
		W^{(k)}_m=F^{(k)}_m-F^{(k)}_{m-1}.
		\feqn
	\end{enumerate}
	For non-negative reals $x\geq 0,$ denote by $P_x^V$ the law of the process $V$ that starts with $\lfloor x\rfloor$ individuals in the generation zero.
	It turns out that for every $n \geq 0,$ the distribution of $(V_0,V_1,...,V_n)$ under $P_0^V$ coincides with the distribution of the array $(D_n^n,D_{n-1}^n,...,D_0^n)$ associated with a transient to the right ERW (see, for instance, Section~2 in \cite{kosygina2}):
	\beqn
	\label{hren}
	(V_0,V_1,...,V_n) \overset{d}{=}\big(D_n^n,D_{n-1}^n,...,D_0^n\big)\qquad  \forall~n\geq 0.
	\feqn
	Under Assumption~\ref{ass1}, $X$ is transient to the right, and hence there exists an infinite sequence of times when the random walk
	moves forward to a ``fresh point", i.~e. to a site which has been never visited before and to which it will never return afterwards \cite{kosygina1,martin1}.
	It turns out \cite{bsingh1,kosygina1} that for the branching process $V$ this implies that the following random times
	are finite with probability one under the law $P_0^V:$
	\beqn
	\label{sigma}
	\sigma_{-1}:=0 \quad \mbox{and}\quad \sigma_k:=\inf\bigl\{i>\sigma_{k-1}: V_i = 0\bigr\},\quad k\geq 0.
	\feqn
	Thus $(\sigma_k)_{k\geq 0}$ is the sequence of renewal times in which the extinction occurs and
	the branching process starts afresh due to the immigration. Notice that while the immigrants serve as
	``founders of dynasties" of descendants, they themselves are not counted in the population of the branching process.
	In what follows we refer to the part of the branching process evolving  between two successive extinction times as a {\em life cycle}
	of the process. The difference $\sigma_k-\sigma_{k-1},$ $k\geq 0$, represents therefore the duration of the $(k+1)$-th life cycle.
	We remark that, although under our assumptions the event $\sigma_k-\sigma_{k-1}=1$ can happen with a positive probability for any $k\geq 0,$  the
	branching process does not get absorbed at zero, and is eventually revived in a future generation with a strictly positive number of immigrants. Let
	\beqn
	\label{rhom}
	\varrho_m := \min\bigl\{k\geq 0:  \sigma_k \geq m \bigr\}
	\feqn
	denote the index of the first renewal epoch at or after generation $m.$
	\par
	Further, let
	\beq
	S_k:=\sum_{i=\sigma_{k-1}}^{\sigma_k-1}V_i, \qquad k\geq 0,
	\feq
	be the total population present during the $(k+1)$-th cycle, and let
	\beq
	M_k:= \max_{\sigma_{k-1} \leq i <\sigma_k}V_i, \qquad k\geq 0,
	\feq
	be the size of the most populated generation in the $(k+1)$-th cycle.
	Notice that the sequence $(\sigma_k-\sigma_{k-1})_{k\geq 0}$ as well as the sequence of pairs $(S_k,M_k)_{k\geq 0}$ are i.i.d. under $P_0^V.$
	The following asymptotic results hold under Assumption~\ref{ass1}
	(see, for instance, Theorem~2.1, Theorem~2.2, and Lemma~8.1 in \cite{kosygina2}, respectively):
	\beqn
	\label{sum_law}
	&&\lim_{n\rightarrow\infty}{n^{\delta/2} P^V_0(S_0 > n)} = K_0 \in (0,\infty),
	\\
	\label{sigma_law}
	&&
	\lim_{n\rightarrow\infty}{n^\delta P^V_0(\sigma_0 > n)} = K_1 \in (0,\infty),
	\\
	\label{Mlemma}
	&&\lim_{n\rightarrow\infty}{n^{\delta} P^V_0(M_0 > n)} = K_2 \in (0,\infty).
	\feqn
	In a few instances, it will be convenient to consider the following stationary version of the branching process.
	Since the random walk $X_n$ is transient to the right, the Markov chain $V_n$ converges in law to a stationary distribution $V_\infty$
	which is the total number of steps left from the site $0$ to site $-1$ for the whole infinite duration of the random walk (see, for instance, \cite[p.~631]{bsingh1}). Notice that $V_\infty$ is finite with probability one because the walk is transient to the right. Let $Y_{k,n}$ denote the number of descendants in generation $n$ of the immigrant that arrived at time $k$.
	To be consistent with the description of the random walk, we enumerate the population in generation $n$ so that, whenever $k<\ell\leq n,$ every descendant of the generation-$\ell$ immigrant has a smaller label than every descendant of the generation-$k$ immigrant. Then,
	\beqn
	\label{vk}
	V_n=\sum_{k=0}^{n-1} Y_{k,n}.
	\feqn
	This labeling convention corresponds to the assumption that at each site $z\geq 0,$ the coins $B(z,i)$ are used by the random walk in the order
	determined by the second index $i.$ In particular, (cf. \eqref{hren}) \eqref{wmk} ensures that
	\beqn
	\label{hren1}
	\big(D^{(k+1)}_k,D^{(k+2)}_k-D^{(k+1)}_k,\ldots, D^{(n+1)}_k-D^{(n)}_k\big)\overset{d}{=} (Y_{n-k,n-k},Y_{n-k-1,n-k},\ldots,Y_{0,n-k})
	\feqn
	whenever $1\leq k<n.$
	\par
	One can introduce ``dummy" immigrants arriving at negative times (as it is done, for instance, in \cite{key-br, multi}) and consider the stationary process
	\beqn
	\label{station}
	\witi V_n=\sum_{k=-\infty}^n Y_{k,n}, \qquad n\in\zz,
	\feqn
	where $\witi V_n$ is distributed as $V_\infty$ for each $n\in\zz.$ The stationary version of the branching process is
	useful because
	\beqn
	\label{identity}
	\bigl(\witi V_{-n}\bigr)_{n\geq 1}\overset{d}{=}(D_n)_{n\geq 1},
	\feqn
	where $X\overset{d}{=}Y$ indicates that random variables (sequences in this specific case) $X$ and $Y$ have the same distribution, and for $k\geq 0,$
	\beqn
	\label{dikey}
	D_k:= \lim_{m\to\infty} D_k^m=\sum_{i=0}^\infty\one{X_{i+1}=k-1,\, X_i=k}
	\feqn
	is the total number of down-crossing steps of the ERW from site $k$ to $k-1.$
	\par
	For $n\leq 0$ and integer $k\in [n,0],$ let
	\beqn
	\label{zi}
	Z_k^n=\sum_{m=n}^k Y_{m,k}.
	\feqn
	Then, for any fixed $n\geq 0,$ (cf. \eqref{hren} and \eqref{hren1}):
	\beqn
	\label{zink}
	\big(D_n^n,D_{n-1}^n,...,D_0^n\big)\overset{d}{=}\big(Z_{-n}^{-n},Z_{-n+1}^{-n},...,Z_0^{-n}\big)\overset{d}{=}(V_0,\ldots,V_n).
	\feqn
	Most of the time, the identity in \eqref{zink} will be enough for our purposes. However, occasionally we will use
	the following stronger version:
	\beqn
	\label{zin1}
	\big(D_k^n\big)_{n\geq 0,\,0\leq k\leq n}\overset{d}{=}\big(Z_{-n+k}^{-n}\big)_{n\geq 0,\,0\leq k\leq n}.
	\feqn
	The last identity is the point at which one has to distinguish carefully between a fixed-level branching-process representation and a process-level representation.  We record the precise form of this passage, together with the renewal convention used later in the proof.
	\begin{lemma}[Triangular representation and renewal transfer]
		\label{triangular-renewal}
		Under Assumption~\ref{ass1}, the equality in distribution in \eqref{zin1} holds as an identity of the whole triangular array, not only row by row.  Equivalently, for every $N\in\nn,$
		\beq
		\big(D_k^n\big)_{0\leq n\leq N,\,0\leq k\leq n}\overset{d}{=}
		\big(Z_{-n+k}^{-n}\big)_{0\leq n\leq N,\,0\leq k\leq n}.
		\feq
		Furthermore, let $\tau=\inf\{j\geq 0:\witi V_j=0\}$ for the stationary process $\big(\witi V_j\big)_{j\in\zz}$ and let
		\beq
		\mathcal C_i=(\sigma_i-\sigma_{i-1},S_i,M_i)
		\feq
		be the successive cycle triples after a zero of the branching process.  The sequence of cycle triples strictly after the finite residual piece ending at $\tau$ has the same law as $\big(\mathcal C_i\big)_{i\geq 0}$ under $P_0^V.$ Consequently, if an event depending on a one-sided sequence of cycles is unchanged by deleting finitely many initial cycles, then its probability is the same whether the cycles are read from the stationary construction after the residual cycle or from the process $V$ under $P_0^V.$
	\end{lemma}
	\begin{proof}
		The first assertion follows from the coin construction.  For a fixed terminal level $n,$ the down-crossing vector is obtained by running the recursion
		\beq
		D_n^n=0,\qquad D_k^n=F_{D_{k+1}^n+1}^{(k)},\qquad 0\leq k<n,
		\feq
		with the failure-before-success variables determined by the stacks of coins at the sites $0,\ldots,n-1.$  If the same construction is written with the space-time labels shifted so that the terminal level $n$ is placed at time $0,$ the descendants of immigrants arriving at times $-n,-n+1,\ldots,0$ give exactly the variables $Z_{-n+k}^{-n}.$  For finitely many terminal levels $0\leq n\leq N,$ both sides are measurable functions of finitely many coin stacks and the same deterministic failure-before-success maps.  The i.i.d. assumption on the cookie piles then permits the common relabeling of the stacks, and hence gives the displayed finite-dimensional equality.  Since $N$ is arbitrary, \eqref{zin1} holds as an equality in distribution of triangular arrays.
		\par
		For the second assertion, observe that zeros of $\witi V$ are regeneration times.  The time $\tau$ is finite almost surely because $\witi P^V(\witi V_0=0)>0$ and the stationary chain visits zero with positive frequency.  By the strong Markov property at $\tau$ and by the independence of the cookie piles to the right of that regeneration point, the cycles following $\tau$ are independent and have the same common law as the cycles of the branching process started from zero.  The part of the stationary process before this first regeneration time is a single finite residual cycle.  Therefore deleting it, or deleting any other finite number of initial cycles, cannot change limsup, liminf, or infinitely-often events whose normalizing sequences tend to infinity and whose cycle windows tend to infinity.  This proves the transfer statement.
	\end{proof}
	We will denote by $\witi P_\omega^V$ and $\witi P^V,$ respectively, the quenched (in a fixed environment $\omega$) and annealed (in a random environment) law of the stationary branching process. Notice that
	\beqn
	\label{ground}
	P_0^V(\,\cdot\,)=\witi P^V\big(\cdot \big| \witi V_0=0\big).
	\feqn
	It follows, for instance from \eqref{identity}, that $\witi P^V\big(\witi V_0=0\big)>0,$ and in fact $\witi P_\omega^V\big(\witi V_0=0\big)>0$ with probability one.
	\par
	We shall use the notation $\sigma_k,$ $\varrho_k,$ $S_k,$ and $M_k$ also for cycles read from the stationary process, but with the convention made explicit in Lemma~\ref{triangular-renewal}.  The cycle intersecting the time origin in the stationary process is a finite residual cycle and need not have the same distribution as a fresh cycle under $P_0^V.$  After the first zero at or to the right of the origin, however, the future cycle triples are i.\,i.\,d. and have the same law as $(\sigma_0,S_0,M_0)$ under $P_0^V.$  Thus the almost-sure limsup, liminf, and infinitely-often events used below may be evaluated under $P_0^V$ after applying the triangular representation \eqref{zin1}, because adding or deleting this finite residual piece does not affect those events.
	\section{Proof of the main results}
	\label{proofs}
	We begin with a brief outline of the proof. First, using properties of the regular variation and
	the tail asymptotic of the renewal times which is given by \eqref{sigma_law}, we reduce the study of
	$\xi^*_n$ to that of $\xi^*_{T_n}.$ The bounds for the latter sequence, stated in \eqref{mineq}, enable us then to exploit
	the connection between the random walk and the branching process $V$ introduced in Section~\ref{reduction}.
	We remark that a similar strategy has been used for RWRE in \cite{nina1}. The implementation of this approach
	(which at most stages is technically fairly different in this paper from the one presented in \cite{nina1}) is based
	on the existence of the renewal structure (life cycles) for the branching process and the asymptotic results for
	the distributional tails of the key random variables stated in \eqref{sum_law}-\eqref{Mlemma} above.
	\par
	A technical point used several times below is that fixed-level identities such as \eqref{zink} are not, by themselves, enough for limsup or Borel-Cantelli arguments over infinitely many hitting levels.  Whenever such an event is transferred from the ERW to a branching process, the transfer is through the triangular-array identity and renewal convention of Lemma~\ref{triangular-renewal}.
	\par
	We start the proof with the following $0-1$ law for the maximal occupation time of the random walk.
	A similar statement for one-dimensional random walks in random environment is given in \cite[Proposition~3.1]{nina1}.
	\begin{lemma}
		\label{zero_one}
		Let $\phi:\nn \rightarrow \rr^+$ be an unbounded eventually increasing function such that, for every fixed integer $r,$
		\beq
		\label{finite-shift-phi}
		\lim_{n\to\infty}\frac{\phi(n+r)}{\phi(n)}=1.
		\feq
		Then
		\beq
		\limsup_{n\rightarrow \infty}\frac{\xi^*_n}{\phi(n)}=c_\phi\in [0,\infty] \qquad P_0-\as
		\feq
	\end{lemma}
	\begin{proof}
		Set
		\beq
		L(X):=\limsup_{n\to\infty}\frac{\xi^*_n}{\phi(n)},
		\feq
		where the local times are computed along the infinite path $X.$ We first show that, for every fixed $k\geq 0,$ the value of $L(X)$ is measurable with respect to the future half-line coin stacks $\sigma(B^{(k)},B^{(k+1)},\ldots).$  Let $N_k$ be the total number of times spent by the walk in $(-\infty,k-1].$  Under Assumption~\ref{ass1}, $N_k<\infty,$ $P_0$-almost surely.  If $X^{(k)}$ denotes the path obtained by deleting from $X$ all visits to $(-\infty,k-1],$ then the time parameter of $X^{(k)}$ differs from the original time parameter by the fixed finite amount $N_k$ once the last visit to $(-\infty,k-1]$ has occurred.  Moreover, the local times at sites below $k$ are bounded by $N_k,$ while the local times at sites at least $k$ are the local times read from $X^{(k)}$ after adding only finitely many initial visits.  Since $\phi(n)\to\infty$ and \eqref{finite-shift-phi} holds, these finite changes in time and in local times do not change the value of the limsup.  Consequently $L(X)$ is determined by $X^{(k)}.$
		\par
		Following \cite{martin1} (see the paragraph right before Lemma~3 there), define recursively a sequence of random times $(\eta_{k,m})_{m\in\nn,k\in\zz}$ by setting
		\beq
		\eta_{k,0}:=-1\qquad \mbox{and}\qquad \eta_{k,m+1}:=\inf\{n>\eta_{k,m}:X_n\geq k\}.
		\feq
		For a fixed $k\in \nn,$ the sequence $X^{(k)}=(X_{\eta_{k,m}})_{m\geq1}$ is the path obtained by retaining only the visits to $\{x\in\nn:x\geq k\}.$  Zerner's argument \cite[Lemma~3]{martin1} applies here because Assumption~\ref{ass1} implies transience to the right, and gives that the quenched law of $X^{(k)}$ in a fixed environment $\omega$ is independent of $(\omega_i)_{i\leq k-1}.$  In the coin construction this retained path is measurable with respect to
		\beqn
		\label{zheka}
		\sigma\bigl(B^{(k)},B^{(k+1)},B^{(k+2)},\ldots\bigr),\qquad k\geq0.
		\feqn
		It follows that, for $\pp$-almost every environment $\omega,$ the random variable $L(X)$ is a tail random variable of the independent sequence of coin stacks under $P_{0,\omega}.$  Kolmogorov's $0$-$1$ law therefore implies that $L(X)$ is $P_{0,\omega}$-almost surely equal to a deterministic value depending only on the environment; denote this value by $L(\omega).$
		\par
		The preceding paragraph holds for every $k.$  Hence, for every rational $q\geq0,$ the event $\{L(\omega)\leq q\}$ belongs to $\sigma(\omega_k,\omega_{k+1},\ldots)$ for every $k.$  Equivalently, it belongs to the tail $\sigma$-field of the i.i.d. sequence of cookie piles.  Kolmogorov's $0$-$1$ law for the environment implies that $\pp(L\leq q)\in\{0,1\}$ for every rational $q.$  The distribution function of $L$ is therefore a step function with a single jump, possibly at $\infty,$ and $L$ is $P_0$-almost surely constant.  This proves the lemma.
	\end{proof}
	
	\subsection{Non-ballistic regime: Proof of Theorem~\ref{thm2}}
	\label{prof-nb}
	\begin{proof} [\textbf{Part~(i)}]
		We first consider the case when $\delta\in (1,2).$ The remaining case $\delta=2$ is exceptional and is treated separately in Lemma~\ref{lemma4} below.
		\begin{lemma}
			\label{lemma1}
			Assumption~\ref{ass1} implies \eqref{positive} when $\delta \in (1,2)$.
		\end{lemma}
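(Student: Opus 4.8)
The plan is to establish \eqref{positive} along a subsequence of the walk's regeneration times. First, by Lemma~\ref{zero_one} applied with $\phi(n)=n^{1/2}$, the $\limsup$ in \eqref{positive} equals a deterministic constant $K_4\in[0,\infty]$, so it is enough to exhibit a (random) sequence $n_i\uparrow\infty$ with $\xi^*_{n_i}\ge c\,n_i^{1/2}$ for some fixed $c>0$ --- and the construction below in fact does this $P_0$-almost surely. Recall from Section~\ref{reduction} that transience yields the ``fresh point'' decomposition: after discarding the initial block, the path of the ERW splits into i.i.d.\ regeneration blocks, each of which is described, through the correspondence recalled in Section~\ref{reduction}, by one life cycle of the branching process $V$. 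Let $\tau_k$ be the time at which the $k$-th block closes, $\Delta\tau_k=\tau_k-\tau_{k-1}$ its duration, and $L_k$ the largest number of visits the walk makes to a single site during the $k$-th block. By the correspondence and the estimate \eqref{mineq}, a block contributes a time $\asymp(\text{cycle width})+2(\text{down-crossing total})$ and a local time $\asymp M_0$, so by \eqref{sum_law}--\eqref{Mlemma} we have $P_0(\Delta\tau_1>x)\asymp x^{-\delta/2}$ (the index $\delta/2\in(\tfrac12,1)$ coming from the down-crossing total $S_0$ dominates the index $\delta$ of the width) and $P_0(L_1>x)\asymp x^{-\delta}$. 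Since $\xi^*_{\tau_k}\ge\max_{2\le j\le k}L_j$, it suffices to find infinitely many $k$ with $L_k^2$ bounded below by a fixed multiple of $\tau_k=\sum_{j\le k}\Delta\tau_j$.

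The deterministic input behind the comparison of $L_k$ with $\Delta\tau_k$ is the bound $S_0\le\sigma_0 M_0$: a long cycle must contain a heavily populated generation. Using it, for $C$ large, $P_0^V\bigl(S_0>x,\,\sigma_0\le C\sqrt{S_0}\bigr)\ge P_0^V(S_0>x)-P_0^V\bigl(\sigma_0>C\sqrt x\bigr)\ge\bigl(\tfrac12 K_0-K_1C^{-\delta}\bigr)x^{-\delta/2}>0$ by \eqref{sum_law} and \eqref{sigma_law}, and on that event $M_0\ge S_0/\sigma_0\ge\sqrt{S_0}/C$, whence $S_0\le C^2M_0^2$. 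Transported to the block quantities, this produces constants $c,\epsilon_0>0$ with
\beq
P_0\bigl(\Delta\tau_1>y,\ L_1^2\ge c\,\Delta\tau_1\bigr)\ \ge\ \epsilon_0\, y^{-\delta/2}\qquad\text{for all large }y,
\feq
and, since $P_0(\Delta\tau_1>y)\asymp y^{-\delta/2}$, also $P_0\bigl(L_1^2\ge c\,\Delta\tau_1\,\big|\,\Delta\tau_1>y\bigr)\ge\epsilon_0'>0$ for all large $y$.

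Next, group the block indices into epochs $\mathcal E_j=\{N_{j-1}+1,\dots,N_j\}$ along a deterministic sequence $N_j\uparrow\infty$ with $N_j/N_{j-1}\to\infty$, and fix a small $\lambda>0$. Let $A_j$ be the event that some $b\in\mathcal E_j$ satisfies $\Delta\tau_b\ge 2\lambda N_j^{2/\delta}$, $\Delta\tau_b\ge\sum_{k\in\mathcal E_j,\,k\ne b}\Delta\tau_k$, and $L_b^2\ge c\,\Delta\tau_b$. Since $A_j$ depends only on the blocks in $\mathcal E_j$, the events $A_j$ are independent. Summing over the (at most one) candidate $b$, conditioning on $W:=\sum_{k\in\mathcal E_j,\,k\ne b}\Delta\tau_k$, applying the displayed bound with $y=\max(2\lambda N_j^{2/\delta},W)$, and using that $W/N_j^{2/\delta}$ converges in law to a positive $(\delta/2)$-stable variable, one gets $P_0(A_j)\ge\gamma>0$ for all large $j$; hence, $P_0$-almost surely, $A_j$ holds for infinitely many $j$. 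On $A_j$ the distinguished block $k^\dagger=b$ satisfies $L_{k^\dagger}^2\ge c\,\Delta\tau_{k^\dagger}\ge 2c\lambda N_j^{2/\delta}$ and $\tau_{k^\dagger}\le\tau_{N_{j-1}}+\sum_{k\in\mathcal E_j}\Delta\tau_k\le\tau_{N_{j-1}}+2\Delta\tau_{k^\dagger}\le\tau_{N_{j-1}}+(2/c)L_{k^\dagger}^2$. Finally, since $\{\tau_n/n^{2/\delta}\}_n$ is tight (it lies in the domain of attraction of a $(\delta/2)$-stable law), the ratios $N_j/N_{j-1}$ can be taken increasing rapidly enough that $\sum_j P_0\bigl(\tau_{N_{j-1}}>2c\lambda N_j^{2/\delta}\bigr)<\infty$; by the Borel--Cantelli lemma, $P_0$-almost surely $\tau_{N_{j-1}}\le 2c\lambda N_j^{2/\delta}\le L_{k^\dagger}^2$ for all large $j$. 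For the infinitely many $j$ on which $A_j$ also holds we obtain $\xi^*_{\tau_{k^\dagger}}\ge L_{k^\dagger}$ and $\tau_{k^\dagger}\le(1+2/c)L_{k^\dagger}^2$, i.e.\ $\xi^*_{\tau_{k^\dagger}}\ge(1+2/c)^{-1/2}\,\tau_{k^\dagger}^{1/2}$ with $\tau_{k^\dagger}\to\infty$, which gives \eqref{positive}.

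The main obstacle is exactly what the epoch construction addresses: both $\xi^*_{\tau_k}$ and the normalizing time $\tau_k$ are assembled from the same heavy-tailed blocks, so the inequality cannot be tested one block at a time; one must isolate an epoch-measurable event (to preserve the independence needed for the second Borel--Cantelli lemma) while separately, and with summable error, dominating the time $\tau_{N_{j-1}}$ accumulated before the epoch. The other delicate point is the conditional estimate $L_1^2\gtrsim\Delta\tau_1$, which rests on the strict gap between the tail exponent $\delta$ of $\sigma_0$ and $M_0$ and the tail exponent $\delta/2$ of $S_0$ provided by \eqref{sum_law}--\eqref{Mlemma}.
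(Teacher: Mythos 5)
Your proof is correct in its essentials, but it takes a genuinely different route from the paper's. The paper establishes \eqref{m4}, namely $\liminf_{n}P_0(\xi^*_n\geq c\sqrt n)>0$, via the branching-process reduction, an inclusion--exclusion over events $A_{i,m}$ that pair a large $M_i$ with a not-too-large $S_i$, and stable limit theorems for the partial sums $\sum S_i$; the reverse Fatou lemma and Lemma~\ref{zero_one} then upgrade positive probability to an almost-sure statement. You instead construct almost surely infinitely many good times directly, by grouping regeneration blocks into widely spaced epochs, using the second Borel--Cantelli lemma within an epoch, and controlling the pre-epoch time $\tau_{N_{j-1}}$ by a first Borel--Cantelli argument over rapidly growing $N_j$. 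The analytical core is also different: where the paper tunes parameters $\beta,\gamma$ in \eqref{tend} so that $P^V_0(M_1^2\geq\beta m^{2/\delta})$ strictly exceeds $P^V_0(S_1>\gamma m^{2/\delta})$, you exploit the elementary bound $S_0\leq\sigma_0 M_0$ together with the tail gap $\delta$ (for $\sigma_0$, $M_0$) versus $\delta/2$ (for $S_0$) from \eqref{sum_law}--\eqref{Mlemma} to get a clean conditional estimate $P^V_0(M_0^2\gtrsim S_0\mid S_0\text{ large})\geq\epsilon_0'>0$; this deterministic inequality does not appear in the paper and is the most attractive feature of your argument. Your epoch-and-Borel--Cantelli scheme is in fact closer in spirit to the paper's separate treatment of $\delta=2$ in Lemma~\ref{lemma4} (events $B_k,D_k,E_k$ along $m_k=k^k$), and unlike the paper's two-pronged proof it would handle $\delta\in(1,2]$ uniformly. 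What your write-up leaves informal is the precise identification of the block duration $\Delta\tau_k$ with $(\sigma_k-\sigma_{k-1})+2S_k$ and of $L_k$ with $M_k$ (the correspondence recalled in Section~\ref{reduction} gives this exactly, via the representation \eqref{tm_rep} and \eqref{mineq}, so it is a gap in exposition rather than in substance), and the ``single dominant block'' estimate for $P_0(A_j)\geq\gamma$ is sketched rather than carried out; both can be completed along the lines you indicate.
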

		\begin{proof}[Proof of Lemma~\ref{lemma1}]
			In order to prove the claim of the lemma, it suffices to show that for all $\delta\in (1,2)$ and any constant $c>0$
			which is small enough we have
			\beqn
			\label{m4}
			\liminf_{n\to \infty}{P_0\bigl(\xi^*_{T_n} \geq c \sqrt{T_n} \bigr)}>0.
			\feqn
			Indeed, \eqref{m4} together with the reverse Fatou's lemma imply that
			\beq
			&&
			P_0\Bigl( \limsup_{n\to\infty}{\frac{\xi^*_n}{\sqrt{n}}}>c\Bigr) \geq P_0\Bigl( \limsup_{m\to\infty}\frac{\xi^*_{T_m}}{T_m^{1/2}} > c  \Bigr)=
			E_0\Bigl( \limsup_{m\to\infty}\one{\xi^*_{T_m} > c T_m^{1/2}}  \Bigr)\\
			&&
			\qquad
			\geq \limsup_{m\to\infty} E_0\bigl(\one{\xi^*_{T_m} > c T_m^{1/2}}  \bigr)=\limsup_{m\to\infty} P_0\bigl(\xi^*_{T_m}> c T_m^{1/2} \bigr) > 0.
			\feq
			By virtue of Lemma~\ref{zero_one}, this yields the claim.
			\par
			We now turn to the proof that \eqref{m4} holds for any sufficiently small $c>0.$
			Observe that according to \eqref{tm_rep} and \eqref{mineq}, we have
			\beq
			P_0\bigl( (\xi^*_{T_m})^2 \geq c^2 T_m \bigr) &\geq & P_0\Bigl( \bigl(\max_{0\leq k\leq m} D_k^m \bigr)^2 \geq c^2 \Bigl(m+2\sum_{k=0}^m D_k^m\Bigr) \Bigr) \\
			&\geq& P_0^V\Bigl( \max_{0\leq i<\varrho_m} M_i^2 \geq c^2 \Bigl(m+2\sum_{i=0}^{\varrho_m}{S_i}\Bigr)\Bigr).
			\feq
			Denote by $\mu:=E_0^V[\sigma_0]$ the annealed (i.~e., taken under the law $P_0^V$) expectation of $\sigma_0,$
			choose an arbitrary $\delta'\in(1,\delta),$ and define
			\beqn
			\label{a-m-def}
			a^{\pm}_m=\bigl\lfloor\mu^{-1} (m \pm m^{1/\delta'})\bigr\rfloor,
			\feqn
			where we use the notation $\lfloor x \rfloor$ to denote the integer part of a real number $x.$  We shall also write
			\beq
			a_m=\lfloor \mu^{-1}m\rfloor
			\feq
			when no upper or lower deviation buffer is needed.
			It follows from the last inequality stated above that
			\beqn
			\label{auxi}
			&&
			P_0\bigl( (\xi^*_{T_m})^2 \geq c^2 T_m \bigr)
			\\
			&&
			\qquad
			\geq P_0^V\Bigl( \max_{1\leq i\leq a_m^-}{M_i^2} \geq c^2\Bigl( m + 2\sum_{i=1}^{a_m^++1}S_i\Bigr) \Bigr)
			- P_0^V(\varrho_m >a^+_m) - P_0^V(\varrho_m < a^-_m).
			\nonumber
			\feqn
			Since $\{\varrho_m>a_m^+\}$ implies $\{\sigma_{a_m^+}<m\}$, and since $\sigma_{a_m^+}=\sum_{i=0}^{a_m^+}(\sigma_i-\sigma_{i-1})$, we have
			\beqn
			\label{an-ineql}
			P_0^V(\varrho_m > a^+_m) \leq P_0^V\Bigl( \frac{\sum_{i=0}^{a_m^+}(\sigma_{i}-\sigma_{i-1}-\mu) }{(a^+_m)^{1/\delta}}
			\leq - \frac{m^{1/\delta'}}{(a^+_m)^{1/\delta}}\Bigr).
			\feqn
			Hence \eqref{sigma_law} and a stable limit theorem for i.i.d. random variables $\sigma_{i}-\sigma_{i-1}$ (see, for instance,
			Theorem~1.5.1 in \cite{rwbook}) imply that $\lim_{m\to\infty} P_0^V(\varrho_m >a^+_m)=0.$ Similarly, one can show that $\lim_{m\to\infty} P_0^V(\varrho_m < a^-_m) =0.$
			Therefore, in order to prove \eqref{m4}, it suffices to show that the following strict lower bound holds:
			\beqn
			\label{lem1ine7}
			\liminf_{m\to\infty} P_0^V\Bigl( \max_{1\leq i\leq a_m^-}{M_i^2} \geq c^2\Bigl( m + 2\sum_{i=1}^{a_m^++1}S_i\Bigr) \Bigr)>0.
			\feqn
			Toward this end, recall $K_0$ and $K_2$ from \eqref{sum_law} and \eqref{Mlemma} respectively, fix any positive constants $\beta,\gamma>0$ such that
			\beqn
			\label{tend}
			\frac{1}{2}\cdot \mu^{-1}\bigl(K_2\cdot \beta^{-\delta/2}-K_0\cdot \gamma^{-\delta/2}\bigr)-K_2^2\cdot \beta^{-\delta}>0,
			\feqn
			and observe that
			\beq
			&&
			P_0^V\Bigl(\max_{1\leq i\leq a_m^-}{M_i^2} \geq c^2\Bigl( m+ 2\sum_{i=1}^{a_m^++1}S_i \Bigr)\Bigr) \geq
			\\
			&& \qquad  \geq
			P_0^V\Bigl( \max_{1\leq i\leq a_m^-}{M_i^2} \geq \beta m^{2/\delta}, \, m + 2\sum_{i=1}^{a_m^++1}S_i \leq \frac{\beta m^{2/\delta}}{c^2} \Bigr)
			\geq P_0^V\Bigl(\bigcup_{i=1}^{a_m^-} A_{i,m}\Bigr),
			\feq
			where
			\beqn
			\label{aist}
			A_{i,m}:=\Bigl\{M_i^2\geq \beta m^{2/\delta},\,\, S_i\leq \gamma m^{2/\delta},\,\,
			m+2\sum_{\overset{1\leq j\leq a_m^++1,}{j\neq i}}S_j < (\beta/c^2-\gamma) m^{2/\delta} \Bigr\}.
			\feqn
			Therefore, the inclusion-exclusion formula yields
			\beqn
			\label{more}
			&&
			P_0^V\Bigl( \max_{1\leq i\leq a_m^-}{M_i^2} \geq c^2 \Bigl(m + 2\sum_{i=1}^{a_m^++1}S_i \Bigr)\Bigr) \geq P_0^V\Bigl(\bigcup_{i=1}^{a_m^-} A_{i,m}\Bigr)
			\nonumber
			\\
			&&
			\qquad
			\geq \sum_{i=1}^{a_m^-}P_0^V(A_{i,m})- \sum_{i=1}^{a_m^-}\sum_{j=i+1}^{a_m^-} P_0^V\Bigl(A_{i,m}\bigcap A_{j,m}\Bigr)
			\nonumber
			\\
			&&
			\qquad
			\geq  a_m^-P_0^V(A_{1,m})-(a_m^-)^2 P_0^V\bigl(M_1^2\geq \beta m^{2/\delta},\,M_2^2\geq \beta m^{2/\delta}\bigr).
			\feqn
			Using the independence of the life-cycles of the underlying branching process, we obtain that
			\beq
			P_0^V(A_{1,m}) =P_0^V\bigl(M_1^2\geq \beta m^{2/\delta},\, S_1\leq \gamma m^{2/\delta}\bigr)\cdot P_0^V\Bigl(m+2\sum_{j=2}^{a_m^++1}S_j <
			(\beta/c^2-\gamma)m^{2/\delta} \Bigr).
			\feq
			Taking into account \eqref{sum_law}, \eqref{Mlemma}, and \eqref{a-m-def}, one can deduce from the following inequality:
			\beq
			P_0^V(M_1^2\geq \beta m^{2/\delta},\, S_1\leq \gamma m^{2/\delta}) \geq P_0^V\bigl( M_1^2 \geq \beta m^{2/\delta} \bigr) -
			P_0^V\bigl(S_1> \gamma m^{2/\delta} \bigr),
			\feq
			that
			\beq
			\liminf_{m\to\infty} a_m^- \cdot P_0^V\bigl(M_1^2\geq \beta m^{2/\delta},\, S_1\leq \gamma m^{2/\delta}\bigr)
			\geq \mu^{-1}\bigl(K_2\cdot \beta^{-\delta/2}-K_0\cdot \gamma^{-\delta/2}\bigr).
			\feq
			Furthermore, it follows from \eqref{sum_law} and a stable limit theorem for i.i.d. variables $S_i$ (see, for instance, Theorem~1.5.1 in \cite{rwbook})
			that the following limit exists and is strictly positive:
			\beq
			\lambda(c,\beta,\gamma):=\lim_{m\to\infty} P_0^V\Bigl(m+2\sum_{j=2}^{a_m^++1}S_j < (\beta/c^2 -\gamma) m^{2/\delta} \Bigr) > 0.
			\feq
			Moreover, given $\beta,\gamma>0,$ we can choose $c>0$ so small that $\lambda(c,\beta,\gamma)>1/2.$
			For such a constant $c>0,$ \eqref{more} along with \eqref{Mlemma} yield
			\beq
			&& \liminf_{m\to\infty}P_0^V\Bigl( \max_{1\leq i\leq a_m^-}{M_i^2} \geq c^2 \Bigl(m + 2\sum_{i=1}^{a_m^++1}S_i \Bigr)\Bigr)
			\\
			&&
			\qquad
			\geq
			\lambda(c,\beta,\gamma)\cdot \mu^{-1}\bigl(K_2\cdot \beta^{-\delta/2}-K_0\cdot \gamma^{-\delta/2}\bigr)-K_2^2\cdot \beta^{-\delta}
			\\
			&&
			\qquad
			\geq
			\frac{1}{2}\cdot \mu^{-1}\bigl(K_2\cdot \beta^{-\delta/2}-K_0\cdot \gamma^{-\delta/2}\bigr)-K_2^2\cdot \beta^{-\delta}.
			\feq
			In view of \eqref{tend}, this implies \eqref{lem1ine7}, and hence \eqref{m4} for the indicated above values of
			the parameter $c>0.$
		\end{proof}
		Lemma~\ref{lemma1} yields part~(i) of Theorem~\ref{thm2} for $\delta\in(1,2).$
		The remaining case $\delta=2$ is considered in the next lemma, by using a slightly different approach which
		is based on a modification of an argument in \cite{nina1}, cf. \cite[p.~172]{nina1}.
		\begin{lemma}
			\label{lemma4}
			Let Assumption~\ref{ass1} hold and suppose that $\delta=2.$ Then \eqref{positive} holds
			along random subsequence $(T_m)_{m\geq 1}$ of integers $n.$ That is,
			\beq
			\limsup_{m\to\infty}\frac{\xi^*_{T_m}}{T_m^{1/2}} > 0, \qquad P_0-\as
			\feq
		\end{lemma}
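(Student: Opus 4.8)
The plan is to reduce the claim, through the branching–process correspondence of Section~\ref{reduction}, to an ``infinitely often'' statement about the cycle maxima $M_k$ and cycle sums $S_k$, and then to prove that statement by a conditional Borel--Cantelli argument along the renewal cycles; this is in the spirit of \cite[p.~172]{nina1}, but tailored to the critical value $\delta=2$, where (in contrast to the case $\delta<2$ of Lemma~\ref{lemma1}) the marginal probabilities $P_0(\xi^*_{T_m}\ge c\sqrt{T_m})$ tend to $0$, so that a genuine ``i.o.'' mechanism is unavoidable. First I would record, from \eqref{tm_rep}, \eqref{mineq} and the remark following \eqref{mineq}, that the time the walk spends strictly to the left of $0$ is $P_0$-a.s. finite, and fix $N_\infty<\infty$ with $2\sum_{k<0}D_k^m\le N_\infty$ for all $m$. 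Using the decomposition of the path into i.i.d. excursions between consecutive fresh points (which is what the renewal times $\sigma_k$ encode), one has, for the level $m=\sigma_k$,
\[
\xi^*_{T_{\sigma_k}}\ \ge\ M_k,\qquad T_{\sigma_k}\ \le\ \sigma_k+2R_k+N_\infty,\qquad R_k:=\sum_{j=0}^{k} S_j .
\]
Since $\delta=2$, \eqref{sigma_law} gives $\mu:=E_0^V[\sigma_0]<\infty$, whence $\sigma_k/k\to\mu$ a.s., while \eqref{sum_law} gives $E_0^V[S_0]=\infty$, whence $R_k/k\to\infty$ a.s. by the strong law for nonnegative summands of infinite mean; hence $\sigma_k+N_\infty=o(R_k)$ and $T_{\sigma_k}\le 3R_k$ for all large $k$, $P_0$-a.s. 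Therefore it suffices to show that, for a suitably small $c>0$, $P_0^V\bigl(M_k^2\ge c^2R_k \text{ for infinitely many } k\bigr)=1$, because on that event, intersected with $\{T_{\sigma_k}\le 3R_k\ \text{eventually}\}$, one gets $\limsup_{k\to\infty}\xi^*_{T_{\sigma_k}}/\sqrt{T_{\sigma_k}}\ge c/\sqrt3>0$.

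For the ``i.o.'' statement I would set $\mathcal F_k=\sigma\bigl((S_j,M_j):0\le j\le k\bigr)$ and $A_k=\{M_k^2\ge c^2R_k\}\in\mathcal F_k$. As $(S_k,M_k)$ is independent of $\mathcal F_{k-1}$ while $R_{k-1}\in\mathcal F_{k-1}$, we get $P_0^V(A_k\mid\mathcal F_{k-1})=h(R_{k-1})$ with $h(r):=P_0^V(M_0^2\ge c^2r+c^2S_0)$. Using $h(r)\ge P_0^V(M_0^2\ge 2c^2r)-P_0^V(S_0>r)$ and invoking \eqref{sum_law} and \eqref{Mlemma} with $\delta=2$ (so $P_0^V(M_0>n)\sim K_2 n^{-2}$ and $P_0^V(S_0>n)\sim K_0 n^{-1}$), one obtains $h(r)\ge\tfrac12\bigl(\tfrac{K_2}{2c^2}-K_0\bigr)r^{-1}$ for all large $r$; choosing $c$ so small that $K_2>2c^2K_0$ keeps the constant positive. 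Thus $P_0^V(A_k\mid\mathcal F_{k-1})\ge c_1/R_{k-1}$ for all large $k$, and the conditional (L\'evy) form of the second Borel--Cantelli lemma identifies $\{A_k\ \text{i.o.}\}$ with $\{\sum_k P_0^V(A_k\mid\mathcal F_{k-1})=\infty\}$ up to a null set. Hence everything comes down to
\[
\sum_{k}\frac{1}{R_k}=\infty\qquad P_0^V\text{-a.s.}
\]

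This last divergence is where the substantive work lies, and I expect it to be the main obstacle. Since $(R_k)$ is nondecreasing with $R_k\to\infty$, a summation by parts bounds $\sum_k R_k^{-1}$ from below, up to an additive constant, by $\tfrac12\sum_n\tau(n)n^{-2}$ with $\tau(n)=\#\{k:R_k\le n\}$, and $\tau(n)\ge k_n\one{R_{k_n}\le n}$ for $k_n:=\lfloor n/(4K_0\log n)\rfloor$. Because $\sum_n(n\log n)^{-1}=\infty$, it suffices to prove $\sum_n(n\log n)^{-1}\one{R_{k_n}>n}<\infty$ a.s., and by monotone convergence this follows once $\sum_n(n\log n)^{-1}P_0^V(R_{k_n}>n)<\infty$. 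The needed bound $P_0^V(R_{k_n}>n)=O(1/\log n)$ is a routine truncation estimate: from $P_0^V(S_0>t)\sim K_0/t$ one has $E_0^V[S_0\wedge n]\sim K_0\log n$ and $E_0^V[(S_0\wedge n)^2]=O(n)$, so $\sum_{j\le k_n}(S_j\wedge n)$ has mean $\sim n/4$ and variance $O(n^2/\log n)$; Chebyshev's inequality, together with $k_n P_0^V(S_0>n)=O(1/\log n)$ to absorb the discarded mass, gives the estimate, and $\sum_n(n\log^2 n)^{-1}<\infty$ closes the loop. Combining the three steps yields $\limsup_{m\to\infty}\xi^*_{T_m}/T_m^{1/2}\ge c/\sqrt3>0$, $P_0$-a.s., which is \eqref{positive} along $(T_m)$. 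The only point demanding care beyond this divergence is the passage in Step~1 from the identity in law of $(V_0,\dots,V_n)$ and $(D^n_n,\dots,D^n_0)$ to the \emph{almost sure} comparison of $\xi^*_{T_{\sigma_k}}$ and $T_{\sigma_k}$ with the cycle quantities $M_k,S_k,\sigma_k$, which relies on the regeneration structure of the walk rather than on the distributional statement alone.
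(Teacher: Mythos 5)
Your proof is correct, and it takes a genuinely different route from the paper's. The paper proves $P_0^V(B_k\ \text{i.o.})=1$ by fixing a superexponential subsequence $m_k=k^k$ together with $n_k=2m_{k-1}$ and constructing events $E_k$ on disjoint blocks of cycles $\bigl(a_{m_{k-1}}^+ +2, a_{m_k}^+ +1\bigr]$, so that the $E_k$ are independent and the second Borel--Cantelli lemma applies directly after checking $P_0^V(E_k)\gtrsim 1/(k\log k)$; the unwanted contribution of the earlier cycles is controlled by separate events $D_k$ with $\sum_k P_0^V(D_k)<\infty$, and the large-deviation bounds \eqref{an-ineq3}--\eqref{an-ineq4} suppress the $\varrho_{m_k}$ corrections. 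You instead dispense with the ad hoc subsequence altogether and apply L\'evy's conditional Borel--Cantelli along all cycles, with $A_k=\{M_k^2\ge c^2R_k\}$, $R_k=\sum_{j\le k}S_j$; the i.i.d. structure of $(S_k,M_k)_k$ gives $P_0^V(A_k\mid\mathcal F_{k-1})=h(R_{k-1})\gtrsim 1/R_{k-1}$, and the whole burden is shifted to the almost-sure divergence of $\sum_k 1/R_k$. That divergence is not obvious at $\delta=2$ (where $S_0$ has tail $\sim K_0/n$ and infinite mean), and the truncation--Chebyshev argument you give, passing through $\tau(n)\ge k_n\one{R_{k_n}\le n}$ with $k_n=\lfloor n/(4K_0\log n)\rfloor$ and $P_0^V(R_{k_n}>n)=O(1/\log n)$, is correct and closes it. What the paper's approach buys is that it stays within the elementary independent-events Borel--Cantelli toolkit; what yours buys is a cleaner, more systematic mechanism with no magic subsequence, at the price of isolating and proving the nontrivial fact $\sum_k 1/R_k=\infty$ a.s. Both proofs rely on the same regeneration coupling between the down-crossing array of the walk and the branching process $V$ (so that $\xi^*_{T_{\sigma_k}}\ge M_k$, $T_{\sigma_k}\le\sigma_k+2R_k+N_\infty$ hold pathwise and not merely in law), a point you correctly flag but which the paper also uses, essentially silently, in the passage from $P_0$ to $P_0^V$ in \eqref{auxi4}.
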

		\begin{proof}[Proof of Lemma~\ref{lemma4}]
			Recall \eqref{rhom} and \eqref{a-m-def}. The proof of the lemma relies on the Borel-Cantelli argument which is
			applied to the following version of \eqref{auxi}:
			\beqn
			\label{auxi4}
			P_0\bigl( (\xi^*_{T_m})^2 \geq c^2 T_m \io\bigr)&\geq& P_0^V\Bigl( \max_{1\leq i\leq a_m^-}{M_i^2} \geq c^2\Bigl( m + 2\sum_{i=1}^{a_m^++1}S_i\Bigr)  \io \Bigr)
			\nonumber
			\\
			&&
			~- P_0^V(\varrho_m >a^+_m \io) - P_0^V(\varrho_m < a^-_m \io)
			\feqn
			with a suitably chosen subsequence of the integer indices $m.$ The inequality in \eqref{auxi4} is a process-level statement.  We first use the triangular representation of Lemma~\ref{triangular-renewal}, rather than only the fixed-row identity \eqref{zink}, and then use the renewal-transfer part of the same lemma to evaluate the resulting finite-shift invariant i.o. event under $P_0^V.$
			\par
			First, we will establish certain large deviation type estimates for the distributional tails of the random variables $\varrho_m.$
			To this end, observe that the inequality stated in \eqref{an-ineql} remains true for any $\delta>1,$ in particular for $\delta=2.$
			Furthermore, \eqref{sigma_law} and the large deviation estimate stated, for instance, in \cite[Theorem~3.4.1]{rwbook} imply that
			the following holds under Assumption~\ref{ass1} (with arbitrary $\delta>1$) for a suitable constant $c_1=c_1(\delta)>0:$
			\beqn
			\label{an-ineq3}
			P_0^V(\varrho_m >a^+_m) \leq \frac{c_1 a_m^+}{m^{\delta/\delta'} }.
			\feqn
			Similarly, using the following inequality instead of \eqref{an-ineql}:
			\beqn
			\label{endi}
			P_0^V(\varrho_m < a^-_m)\leq P_0^V(m \leq \sigma_{a_m^-})\leq P_0^V\Bigl( \frac{\sum_{i=0}^{a_m^-}(\sigma_{i}-\sigma_{i-1}-\mu) }{(a^-_m)^{1/\delta}} \geq
			\frac{m^{1/\delta'}}{(a^-_m)^{1/\delta}}\Bigr),
			\feqn
			one can deduce from \eqref{sigma_law} and \cite[Theorem~3.4.1]{rwbook} that the following holds under Assumption~\ref{ass1} (with arbitrary $\delta>1$)
			for some constant $c_2=c_2(\delta)>0:$
			\beqn
			\label{an-ineq4}
			P_0^V(\varrho_m < a^-_m) \leq \frac{c_2 a_m^-}{m^{\delta/\delta'}}.
			\feqn
			\begin{remark}
				We remark that, in the course of proving \eqref{an-ineq3}, in order to formally meet the lower tail conditions of Theorem~3.4.1 in \cite{rwbook} one
				can, for instance, use in \eqref{an-ineql} the following ``unpolarized" version of $\sigma_i-\sigma_{i-1}-\mu$ which has
				the same structure of upper and lower distribution tails:
				\beq
				(\sigma_i-\sigma_{i-1}-\mu)':=U_i\cdot (\sigma_i-\sigma_{i-1}-\mu),
				\feq
				where $U=(U_i)_{i\geq 0}$ is a sequence of i.i.d. Bernoulli random variables, independent of ``anything else"
				(i.~e., such that the probability law $P^U$ of $U$ is independent of the measure $P^V$ in the enlarged probability space), and such that
				\beq
				P^U(U_i=1)=P^U(U_i=-1)=\frac{1}{2}.
				\feq
				Note that (cf. \eqref{an-ineql}) $P_0^V\Bigl(\sum\limits_{i=0}^{a_m^+}(\sigma_{i}-\sigma_{i-1}-\mu) \leq - m^{1/\delta'} \Bigr)
				\leq
				P_0^V\Bigl(\sum\limits_{i=0}^{a_m^+}(\sigma_{i}-\sigma_{i-1}-\mu)' \leq - m^{1/\delta'} \Bigr).$
			\end{remark}
			Fix now any $c>0$ and recall \eqref{auxi}. For $k\in \nn,$ let
			\beq
			m_k=k^k,
			\feq
			and define the following events:
			\beq
			B_k&=&\Bigl\{\max_{a_{2m_{k-1}}< i\leq a_{m_k}^-}{M_i^2} \geq c^2\Bigl( m_k + 2\sum_{i=1}^{a_{m_k}^++1}S_i\Bigr)\Bigr\},\\
			D_k&=&\Bigl\{2c^2\cdot\sum_{i=1}^{a_{m_{k-1}}^++1}S_i>m_k\log m_k\Bigr\},\\
			E_k&=&\Bigl\{\max_{a_{2m_{k-1}}< i\leq a_{m_k}^-}{M_i^2} \geq 2 m_k\log m_k,\,c^2\Bigl( m_k + 2\sum_{j=a_{m_{k-1}}^++2}^{a_{m_k}^++1}S_j\Bigr)\leq m_k\log m_k\Bigr\}.
			\feq
			By virtue of \eqref{sum_law}, Theorem~3.8.1 along with Theorem~1.5.1-(ii) in \cite{rwbook} (notice that the random centering
			required in the former is given by the latter), there exists a positive constant $c_3=c_3(c)>0$ such that
			\beq
			P_0^V(D_k)\leq c_3\frac{m_{k-1}}{m_k\log m_k}\leq c_3 \frac{(k-1)^{k-1} }{k^{k+1}\log k}\leq c_3  k^{-2}.
			\feq
			Thus by the first Borel-Cantelli lemma, $P_0^V(D_k\io)=0.$ On the other hand, similarly to \eqref{aist} and \eqref{more}, setting
			\beq
			\witi A_{i,k}:=\Bigl\{M_i^2\geq 2 m_k \log m_k,\,\, c^2\Bigl(m_k+2\sum_{j=a_{m_{k-1}}^++2}^{a_{m_k}^++1}S_j\Bigr) \leq m_k \log m_k \Bigr\},
			\feq
			Choose any $i_k$ in the deterministic block $\{i:a_{2m_{k-1}}<i\leq a_{m_k}^-\}$ and use the exchangeability of the i.i.d. cycle triples inside this block.  We obtain
			\beq
			&&
			P_0^V( E_k ) \geq P_0^V\Bigl(\bigcup_{i=1+a_{2m_{k-1}}}^{a_{m_k}^-} \witi A_{i,k}\Bigr)
			\geq \sum_{i=1+a_{2m_{k-1}}}^{a_{m_k}^- }P_0^V\bigl(\witi A_{i_k,k}\bigr)
			- \sum_{i=1}^{a_{m_k}^-}\sum_{j=i+1}^{a_{m_k}^-} P_0^V\Bigl(\witi A_{i,k}\bigcap \witi A_{j,k}\Bigr)
			\\
			&&
			\qquad
			\geq  (a_{m_k}^- -a_{2m_{k-1}})P_0^V\bigl(\witi A_{i_k,k}\bigr)-(a_{m_k}^-)^2 P_0^V\bigl(M_1^2\geq m_k \log m_k,\,M_2^2\geq  m_k \log m_k\bigr).
			\feq
			Thus by virtue of \eqref{Mlemma} and Theorem~1.5.1-(ii) in \cite{rwbook}, if $c>0$ is sufficiently small, then
			\beq
			P_0^V(E_k) \geq \frac{c_4}{\log m_k}= \frac{c_4}{k \log k},
			\feq
			for a suitable constant $c_4>0.$ Since the events $E_k$ are independent of each other, the second Borel-Cantelli lemma yields $P_0^V(E_k\io)=1.$
			\par
			For an event $A$ in the underlying probability space, let $A^c$ denote its complement.
			Observe now that
			\beq
			E_k\bigcap D_k^c \subset B_k,\qquad k\geq 0,
			\feq
			and hence $P_0^V(B_k\io)=1$ for any constant $c>0$ small enough. Finally, \eqref{an-ineq3} and \eqref{an-ineq4} imply that
			\beq
			P_0^V(\varrho_{m_k} < a^-_{m_k}\io)=P_0^V(\varrho_{m_k} > a^+_{m_k}\io)=0.
			\feq
			It thus follows from \eqref{auxi4} that
			\beq
			P_0\bigl( \xi^*_n \geq c\sqrt{n}  \io\bigr)\geq P_0\bigl( (\xi^*_{T_m})^2 \geq c^2 T_m \io\bigr)=1
			\feq
			for sufficiently small values of the constant $c>0.$ This completes the proof of the lemma.
		\end{proof}
		In view of Lemmas~\ref{lemma1} and~\ref{lemma4}, the proof of part~(i) of Theorem~\ref{thm2} is completed.
	\end{proof}
	$\mbox{}$
	\begin{proof} [\textbf{Part~(ii)}]
		First, we will show that for any constant $\alpha>1/\delta,$
		\beq
		\lim_{n\to\infty}\frac{\xi_n^*}{n^{1/2}(\log  n)^\alpha}=0,\qquad P_0-\as
		\feq
		Fix any $\alpha>1/\delta$ and let $\phi(n)=n^{1/2}(\log n)^\alpha.$  Recall the alternative notation $\xi^*(n)$ for $\xi^*_n.$
		Observe that in order to prove the above claim, it suffices to show that
		\beqn \label{claim1}
		\limsup_{m\to\infty}\frac{\xi^*(T_m)}{\phi(T_{m-1})} = 0,\qquad P_0-\as
		\feqn
		Indeed, let $k_m,$ $m\in\nn,$ be the (uniquely defined) non-negative random integers such that
		\beqn
		\label{k-n-1}
		T_{k_m} < m \leq T_{k_m+1}, \qquad m\in\nn.
		\feqn
		Then, since $\phi(n)$ is an eventually increasing sequence,
		\beqn
		\label{26a}
		\limsup_{m\to\infty}\frac{\xi^*(m)}{\phi(m)} &\leq& \limsup_{m\to\infty}\frac{\xi^*(T_{k_m+1})}{\phi(m)} \leq \limsup_{m\to\infty}\frac{\xi^*(T_{k_m+1})}{\phi(T_{k_m})} \nonumber \\
		&\leq& \limsup_{m\to\infty}\frac{\xi^*(T_{m})}{\phi(T_{m-1})} = 0, \qquad  P_0-\as
		\feqn
		Recall now \eqref{an-ineq3} and \eqref{an-ineq4}.
		Let $\psi_\veps(m)=m^{2/\delta}(\log  m)^{-\veps}$ with $\veps\in(0,2\alpha)$ and let $m_i=3^i$ for $i\in\nn.$ Then, for any constant $c>0,$ we have
		\beqn
		\label{variant}
		&& \sum_{i=1}^\infty P_0\bigl(T_{m_{i}} < c\psi_\veps(m_{i+1})\bigr) \leq \sum_{i=1}^\infty
		\Bigl[ P_0^V\Bigl(2\sum_{k=0}^{a_{m_i}^--1} S_k < c\psi_\veps(m_{i+1})\Bigr) + P_0^V\bigl(\varrho_{m_i} < a_{m_i}^-\bigr)\Bigr]
		\nonumber
		\\
		&&\qquad \leq \sum_{i=1}^\infty \Bigl[ P_0^V\Bigl(2\max_{0\leq k <a_{m_i}^-} S_k < c\psi_\veps(m_{i+1})\Bigr) +
		P_0^V\bigl(\varrho_{m_i} < a_{m_i}^-\bigr)\Bigr]
		\\
		&&\qquad  \leq \sum_{i=1}^\infty \Bigl[ \Bigl( 1- \frac{c_5}{(c \psi_\veps(m_{i+1}))^{\delta/2}} \Bigr)^{a_{m_i}^-}
		+ \frac{c_2 a_{m_i}^-}{(m_i)^{\delta/\delta'}} \Bigr]
		\leq \sum_{i=1}^\infty c_6\cdot\Bigl[ e^{-c_7(i+1)^{\veps\delta/2}}+ e^{-c_8 i}\Bigr]<\infty,
		\nonumber
		\feqn
		where $c_5,c_6,c_7,c_8>0$ are suitable positive constants and $c_2$ is the constant which appears at \eqref{an-ineq4}.
		Since the preceding summability holds for every fixed $c>0,$ the Borel-Cantelli lemma, applied for instance to $c=1,2,\ldots,$ gives
		\beq
		\liminf_{i\to\infty} \frac{T_{m_i}}{\psi_\veps(m_{i+1})} = \infty, \qquad P_0-\as
		\feq
		Since for each $n\in \nn,$
		\beqn
		\label{lsz3}
		m_i< n \leq m_{i+1}\quad \mbox{for some $i\in\nn$ which is uniquely determined by $n,$}
		\feqn
		then
		\beqn
		\label{faccord}
		\liminf_{n\to\infty}\frac{T_n}{\psi_\veps(n)} \geq \liminf_{i\to\infty} \frac{T_{m_i}}{\psi_\veps(m_{i+1})} = \infty, \qquad P_0-\as,
		\feqn
		and hence $P_0\bigl( T_m < \psi_\veps(m) \io \bigr)=0.$
		\par
		Recall $D_k$ from \eqref{identity} and \eqref{dikey}. The pathwise bound \eqref{fineq}, the identity \eqref{identity}, and the inequality $D_k^m\leq D_k$ imply that for any constant $b>0,$
		\beqn
		\label{lsz1}
		&& P_0\Bigl( \limsup_{m\to\infty}\frac{\xi^*(T_m)}{\phi(T_{m-1})} > b \Bigr) \nonumber
		\\
		&&
		\qquad
		\leq P_0\Bigl( \limsup_{m\to\infty}\frac{\max_{0\leq k\leq m} D_k^m}{\phi(\psi_\veps(m-1))}>\frac{b}{2}\bigr)
		\leq P_0\Bigl( \limsup_{m\to\infty}\frac{\max_{0\leq k\leq m} D_k}{\phi(\psi_\veps(m-1))}>\frac{b}{2}\bigr) \nonumber
		\\
		&&
		\qquad
		\leq P_0^V\Bigl( \limsup_{m\to\infty}\frac{\max_{0\leq k\leq \varrho_m} M_k}{\phi(\psi_\veps(m-1))}>\frac{b}{2}\Bigr)
		\leq  P_0^V\Bigl( \limsup_{m\to\infty}\frac{\max_{0\leq k\leq m} M_k}{\phi(\psi_\veps(m-1))}>\frac{b}{2}\Bigr)\nonumber
		\\
		&&
		\qquad
		=P_0^V\Bigl( \limsup_{m\to\infty}\frac{\max_{0\leq k\leq m} M_k}{m^{1/\delta} (\log m)^{\alpha-\veps/2}}>\frac{b\delta^\alpha}{2^{1+\alpha}}\Bigr),
		\feqn
		where in the last but one step we used the inequality $\varrho_m\leq m.$ The passage from the ERW down-crossing field to the cycle maxima in \eqref{lsz1} uses Lemma~\ref{triangular-renewal}.  The fixed-$m$ identity \eqref{zink} would not be sufficient here, because the event contains a limsup over $m.$
		\par
		Let $r_m\in [0,m]$ be the largest integer such that $\max_{0\leq k\leq m} M_k=M_{r_m}.$
		That is
		\beqn
		\label{gm}
		r_m=\max\bigl\{n \in [0,m]: M_n=\max_{0\leq k\leq m} M_k \bigr\},\qquad m\in\nn.
		\feqn
		Note that $r_m\leq m$ for any $m\in\nn.$ Therefore,
		\beqn
		\label{lsz4}
		&&
		P_0^V\Bigl( \limsup_{m\to\infty}\frac{\max_{0\leq k\leq m} M_k}{m^{1/\delta}  (\log m)^{\alpha-\veps/2}}>\frac{b\delta^\alpha}{2^{1+\alpha}}\Bigr)
		=P_0^V\Bigl( \limsup_{m\to\infty}\frac{M_{r_m}}{m^{1/\delta}  (\log m)^{\alpha-\veps/2}}>\frac{b\delta^\alpha}{2^{1+\alpha}}\Bigr)\nonumber
		\\
		&&
		\qquad
		\leq
		P_0^V\Bigl( \limsup_{m\to\infty}\frac{M_{r_m}}{r_m^{1/\delta}  (\log r_m)^{\alpha-\veps/2}}>\frac{b\delta^\alpha}{2^{1+\alpha}}\Bigr)
		\nonumber
		\\
		&&
		\qquad
		\leq
		P_0^V\Bigl( \limsup_{m\to\infty}\frac{M_m}{m^{1/\delta}  (\log m)^{\alpha-\veps/2}}>\frac{b\delta^\alpha}{2^{1+\alpha}}\Bigr).
		\feqn
		It follows from \eqref{Mlemma} that if $\veps\in (0,2\alpha)$ is chosen in such a way that in fact $\alpha-\veps/2>1/\delta,$ then
		\beq
		\sum_{m=1}^\infty P_0^V\Bigl( \frac{M_m}{m^{1/\delta}  (\log m)^{\alpha-\veps/2}}>\frac{b\delta^\alpha}{2^{1+\alpha}}\Bigr)  <\infty.
		\feq
		Thus the Borel-Cantelli lemma combined with \eqref{lsz3} implies that for $\veps\in (0,2\alpha-2/\delta),$
		\beq
		P_0^V\Bigl( \frac{M_m}{m^{1/\delta}  (\log m)^{\alpha-\veps/2}}>\frac{b\delta^\alpha}{2^{1+\alpha}}\io\Bigr)=0.
		\feq
		It then follows from \eqref{lsz1} and \eqref{lsz4} that
		\beq
		P_0\Bigl( \limsup_{m\to\infty}\frac{\xi^*(T_m)}{\phi(T_{m-1})} > b \Bigr)
		\leq  P_0^V\Bigl( \limsup_{m\to\infty}\frac{\max_{0\leq k\leq m} M_k}{\phi(\psi_\veps(m-1))}>\frac{b}{2}\Bigr)=0.
		\feq
		This completes the proof of \eqref{claim1}.
		\\
		$\mbox{}$
		\\
		We now turn to the proof that for any constant $\alpha>1/\delta,$
		\beq
		\lim_{n\to\infty}\frac{(\log {n})^{\alpha} \xi_n^*}{n^{1/2}}=\infty, \qquad P_0-\as
		\feq
		Fix any $\alpha>1/\delta$ and let $\phi(n)=n^{1/2}(\log n)^{-\alpha}.$ In order to prove the claim, it suffices to show that
		\beqn
		\label{claim3}
		\liminf_{m\to\infty}\frac{\xi^*(T_m)}{\phi(T_{m+1})} = \infty, \qquad P_0-\as
		\feqn
		Indeed, in view of \eqref{k-n-1}, it follows from \eqref{claim3} that
		\beq
		\infty = \liminf_{m\to\infty}\frac{\xi^*(T_m)}{\phi(T_{m+1})} &\leq& \liminf_{m\to\infty}\frac{\xi^*(T_{k_m})}{\phi(T_{k_m+1})} \leq \liminf_{m\to\infty}\frac{\xi^*(m)}{\phi(m)}, \qquad  P_0-\as
		\feq
		We will next show that \eqref{claim3} indeed holds true. Fix any $\veps\in (0,\alpha\delta-1)$ and define
		\beq
		\psi_\veps(m):=m^{2/\delta}(\log  m)^{2/\delta+\veps},\qquad m\in \nn.
		\feq
		By a counterpart of the law of the iterated logarithm for i.i.d. random variables in the domain of attraction of a stable law
		and with infinite variance, we have (see, for instance, Theorem~1.6.6. in \cite{rwbook}):
		\beq
		\limsup_{m\to\infty}\frac{\sum_{k=1}^m S_k}{\psi_\veps(m)} = 0, \qquad P_0^V-\as
		\feq
		Combining this result with the fact that $P_0^V\bigl(\lim_{m\to\infty}\frac{\varrho_m}{m}=\mu^{-1}\bigr)=1$
		(which is an implication of the renewal theorem applied to the renewal sequence $\sigma_k$), and using the triangular representation of Lemma~\ref{triangular-renewal} for this process-level bound, we obtain that
		\beq
		P_0\bigl(T_m > \psi_\veps(m) \io \bigr)=0.
		\feq
		Therefore,
		\beq
		\liminf_{m\to\infty}\frac{\xi^*(T_m)}{\phi(\psi_\veps(m+1))} \leq \liminf_{m\to\infty}\frac{\xi^*(T_m)}{\phi(T_{m+1})},\qquad P_0-\as
		\feq
		Thus it suffices to prove that the left-hand side of the above inequality is infinity. Recall \eqref{a-m-def} and \eqref{an-ineq4}.
		Then, for $m_i=3^i$  and any constant $c>0,$ similarly to \eqref{variant}, we have:
		\beqn
		\label{44a}
		&& \sum_{i=1}^\infty P_0\Bigl( \xi^*(T_{m_i}) < c \phi\bigl(\psi_\veps(m_{i+1}+1)\bigr) \Bigr)
		\leq \sum_{i=1}^\infty P_0^V\Bigl( \max_{0\leq k<\varrho_{m_i}} M_k < c \phi\bigl(\psi_\veps(m_{i+1}+1)\bigr) \Bigr)
		\nonumber \\
		&& \qquad \leq \sum_{i=1}^\infty \Bigl[P_0^V\Bigl( \max_{0\leq k<a_{m_i}^-} M_k <
		c \phi\bigl(\psi_\veps(m_{i+1}+1)\bigr) \Bigr) + P_0^V(a_{m_i}^->\varrho_{m_i})\Bigr]
		\nonumber
		\\
		&& \qquad \leq \sum_{i=1}^\infty c_9\cdot \Bigl[e^{-c_{10}(i+1)^{\alpha\delta-1-\veps\delta/2}} +  \frac{a_{m_i}^-}{(m_i)^{\delta/\delta'}}\Bigr] <\infty ,
		\feqn
		where $c_9>0$ and $c_{10}>0$ are some appropriate positive constants. Therefore, the Borel-Cantelli lemma yields
		(recall that $\delta\leq 2$ and the value of the parameter $\veps$ is chosen from the interval
		$(0,\alpha\delta-1),$ so that $\alpha\delta-1-\veps\delta/2\geq \alpha\delta-1-\veps>0$):
		\beq
		\liminf_{i\to\infty}\frac{\xi^*(T_{m_i})}{\phi(\psi_\veps(m_{i+1}+1))} =\infty,\qquad P_0-\as
		\feq
		This completes the proof of \eqref{claim3} by using a suitable variation of \eqref{faccord}.
	\end{proof}
	\subsection{Ballistic regime: Proof of Theorem~\ref{thm1}}
	\label{ballistic}
	By \cite[Theorem~1.5.3]{rv-book} combined with \cite[Theorem~1.9.5]{rv-book}, the regularly varying sequence
	$\phi(n)$ is asymptotically equivalent to a non-decreasing sequence $\bar \phi(n)$
	which is also regularly varying with index $\rho.$ By the asymptotic equivalence we
	mean that $\lim_{n\to\infty} \bar \phi(n)/\phi(n)=1.$ Since
	\beq
	\limsup_{n\to\infty} \frac{\xi_n^*}{\phi(n)}=\limsup_{n\to\infty} \frac{\xi_n^*}{\bar \phi(n)}
	\qquad \mbox{and} \qquad
	\liminf_{n\to\infty} \frac{\xi_n^*}{\phi(n)}=\liminf_{n\to\infty} \frac{\xi_n^*}{\bar \phi(n)},
	\feq
	we can assume without loss of generality that $\phi(n)$ is a non-decreasing sequence. In the rest
	of the paper we will make this additional assumption without further notice.
	\begin{proof} [\textbf{Part~(i)}]
		Suppose first that
		\beqn
		\label{phi1}
		\sum_{m=1}^\infty{\frac{1}{[\phi(m)]^{\delta}}} < \infty.
		\feqn
		Recall $D_k$ from \eqref{identity} and \eqref{dikey}. Using \eqref{fineq} and the inequalities $\varrho_m\leq m,$ $m\leq T_m,$ and $D_k^m\leq D_k,$ we obtain that for any constant $b>0,$
		\beqn
		\label{lsz}
		&& P_0\Bigl( \limsup_{m\to\infty}\frac{\xi^*(T_m)}{\phi(T_{m-1})} > b \Bigr) \nonumber
		\\
		&&
		\qquad
		\leq P_0\Bigl( \limsup_{m\to\infty}\frac{\max_{0\leq k\leq m} D_k^m}{\phi(m-1)}>\frac{b}{2}\bigr) \leq
		P_0\Bigl( \limsup_{m\to\infty}\frac{\max_{0\leq k\leq m} D_k}{\phi(m-1)}>\frac{b}{2}\bigr)
		\nonumber \\
		&&
		\qquad
		\leq P_0^V\Bigl( \limsup_{m\to\infty}\frac{\max_{0\leq k\leq \varrho_m} M_k}{\phi(m-1)}>\frac{b}{2}\Bigr)
		\leq  P_0^V\Bigl( \limsup_{m\to\infty}\frac{\max_{0\leq k\leq m} M_k}{\phi(m-1)}>\frac{b}{2}\Bigr). \qquad \qquad \mbox{}
		\feqn
		The second inequality in \eqref{lsz} is another use of Lemma~\ref{triangular-renewal}.  The bound involves the limiting down-crossing field and a limsup over $m,$ so the row-wise identity \eqref{zink} alone is not enough.
		\par
		Recall $r_m$ from \eqref{gm}. Then (recall that the sequence $\phi(m)$ is regularly varying, and hence $\lim_{m\to\infty}\frac{\phi(m)}{\phi(m-1)}=1$),
		\beq
		&&
		P_0^V\Bigl( \limsup_{m\to\infty}\frac{\max_{0\leq k\leq m} M_k}{\phi(m-1)}>\frac{b}{2}\Bigr)
		=P_0^V\Bigl( \limsup_{m\to\infty}\frac{M_{r_m}}{\phi(m)}>\frac{b}{2}\Bigr)
		\\
		&&
		\qquad
		\leq
		P_0^V\Bigl( \limsup_{m\to\infty}\frac{M_{r_m}}{\phi\bigl(r_m\bigr)}>\frac{b}{2}\Bigr)
		\leq
		P_0^V\Bigl( \limsup_{n\to\infty}\frac{M_n}{\phi(n)}>\frac{b}{2}\Bigr).
		\feq
		It follows from \eqref{Mlemma} and \eqref{phi1} that
		\beq
		\sum_{n=1}^\infty P_0^V\Bigl( \frac{M_n}{\phi(n)}>\frac{b}{2}\Bigr)  <\infty.
		\feq
		Thus the Borel-Cantelli lemma and \eqref{lsz} imply
		\beq
		P_0\Bigl( \limsup_{m\to\infty}\frac{\xi^*(T_m)}{\phi(T_{m-1})} > b \Bigr)
		\leq  P_0^V\Bigl( \limsup_{m\to\infty}\frac{\max_{0\leq k\leq m} M_k}{\phi(m-1)}>\frac{b}{2}\Bigr)=0.
		\feq
		By virtue of \eqref{26a}, this completes the proof of the first half of part~(i) of Theorem~\ref{thm1}.
		\\
		$\mbox{}$
		\\
		Suppose now that
		\beqn
		\label{phi}
		\sum_{m=1}^\infty{\frac{1}{[\phi(m)]^{\delta}}} = \infty.
		\feqn
		Recall \eqref{rwspeed} and the fact that $\speed>0$ (the asymptotic speed is positive) when $\delta>2.$
		Observe that in order to prove that
		\beq
		\limsup_{n\to\infty}\frac{\xi^*_n}{\phi(n)} = \infty,\qquad P_0-\as,
		\feq
		it suffices to show that for any constant $b>0,$
		\beqn
		\label{lsw}
		\limsup_{m\to\infty} \frac{\max_{0\leq i <\varrho_m}M_i}{\phi(\lfloor 2\speed^{-1} m \rfloor )} > b, \qquad P_0^V-\as
		\feqn
		Indeed, $P_0\bigl(T_m > 2\speed^{-1}m \io\bigr)=0$ by virtue of \eqref{rwspeed1}. Hence
		\eqref{lsw} implies that for every $b>0,$
		\beq
		&& P_0\Bigl(\limsup_{m\to\infty} \frac{\xi^*(m)}{\phi(m)}>b\Bigr) \geq P_0\Bigl(\limsup_{m\to\infty} \frac{\xi^*(T_m)}{\phi(T_m)}>b\Bigr)
		\\
		&&
		\qquad
		\geq P_0\Bigl(\limsup_{m\to\infty} \frac{\xi^*(T_m)}{\phi(\lfloor 2\speed^{-1} m \rfloor)}>b\Bigr)
		\geq P_0^V\Bigl(\limsup_{m\to\infty} \frac{\max_{0\leq i<\varrho_{m}}M_i}{\phi(\lfloor 2\speed^{-1} m \rfloor)} > b \Bigr)=1,
		\feq
		where the last inequality is justified by Lemma~\ref{triangular-renewal}; the finite residual cycle in the stationary representation is immaterial for the displayed limsup event. Thus the proof of part (i) of Theorem~\ref{thm1} will be completed once we prove \eqref{lsw}.
		\par
		We turn now to the proof of \eqref{lsw}. Recall the notation $\mu=E_0^V[\sigma_0]$ that we have used before.
		By the renewal theorem $P_0^V\bigl(\lim_{m\to\infty}\frac{\varrho_m}{m}=\frac{1}{\mu}\bigr)=1$, and hence $P_0^V\bigl( \varrho_m < \frac{m}{2\mu} \io\bigr)=0.$
		It thus suffices to show that
		\beqn
		\label{lsw4}
		\limsup_{m\to\infty} \frac{\max_{0\leq i \leq \lfloor m\mu^{-1}/2 \rfloor }M_i}{\phi(\lfloor 2\speed^{-1} m \rfloor )}
		\geq  \limsup_{m\to\infty} \frac{M_{\lfloor m\mu^{-1}/2 \rfloor}}{\phi(\lfloor 2\speed^{-1} m \rfloor )} >b, \qquad P_0^V-\as
		\feqn
		To this end, observe that by virtue of \eqref{Mlemma} and \eqref{phi},
		\beq
		\sum_{m=1}^\infty P_0^V\Bigl( M_{\lfloor m\mu^{-1}/2 \rfloor} > b \phi(\lfloor 2\speed^{-1} m \rfloor)\Bigr) = \infty.
		\feq
		The map $m\mapsto \lfloor m\mu^{-1}/2\rfloor$ has uniformly bounded multiplicities.  Hence the divergent series above contains a divergent subseries indexed by distinct values of $n=\lfloor m\mu^{-1}/2\rfloor.$  Along this subseries the corresponding events depend on distinct members of the i.i.d. sequence $(M_n)$ and are therefore independent.  Regular variation of $\phi$ ensures that replacing $\lfloor 2\speed^{-1}m\rfloor$ by a constant multiple of $n$ changes the tail estimates only by multiplicative constants.  The second Borel-Cantelli lemma therefore gives \eqref{lsw4}, and hence \eqref{lsw}. The proof of part~(i) of Theorem~\ref{thm1} is completed.
	\end{proof}
	$\mbox{}$
	\begin{proof} [\textbf{Part (ii)}]
		Let $\phi(n)=n^{1/\delta}(\log n)^{-\alpha}$ for a fixed constant $\alpha>1/\delta.$ In order to prove the claim, it suffices to verify \eqref{claim3}
		(see the next two lines below \eqref{claim3}). Toward this end, observe that according to the law of large numbers for $T_n$
		stated in \eqref{rwspeed1} we have $P_0(T_m > 2\speed^{-1}m\io)=0,$ and hence
		\beq
		\liminf_{m\to\infty}\frac{\xi^*(T_m)}{\phi(2\speed^{-1} (m+1))} \leq \liminf_{m\to\infty}\frac{\xi^*(T_m)}{\phi(T_{m+1})}, \qquad  P_0-\as
		\feq
		Thus it suffices to show that the left-hand side of the above inequality is infinity.
		Let $a_m^-$ be as defined in \eqref{a-m-def} with the only exception that this time we will use an arbitrary constant $\delta'\in(1,2).$
		In view of \eqref{sigma_law} and \eqref{endi}, Chebyshev's inequality implies
		\beqn
		\label{an-ineq5}
		P_0^V(a_m^- >\varrho_{m}) \leq \frac{a_m^-+1}{m^{2/\delta'}} \cdot E_0^V\bigl[ (\sigma_0-\mu)^2\bigr].
		\feqn
		Therefore, a slight modification of \eqref{44a} (namely, formally replacing there the composition of two functions $\phi\circ \psi_\veps$ by
		the ``new" $\phi(n)=n^{1/\delta}(\log n)^{-\alpha}$ and also using \eqref{an-ineq5} instead of \eqref{an-ineq4}) along with the Borel-Cantelli lemma imply that
		$\liminf_{i\to\infty}\frac{\xi^*(T_{m_i})}{\phi(m_{i+1}+1)} =\infty,$ $P_0-\as.$
		This completes the proof of \eqref{claim3} by using an appropriate variation of \eqref{faccord}.
	\end{proof}
	
\end{document}